\title{Existence, nonexistence and multiplicity of positive solutions for singular boundary value problems involving $\varphi$-Laplacian}
\begin{document}
\author{Chan-Gyun Kim 
 \\
 {\small Department of Mathematics Education, Pusan National University, \hfill{\ }}\\
\ \ {\small \qquad \ \ Busan, 609-735, Korea\hfill {\ }}\\
 }

\date{}
\maketitle

\begin{abstract}
 In this paper, we establish the results on the existence, nonexistence and multiplicity of positive solutions to singular boundary value problems involving $\varphi$-Laplacian. Our approach is based on the fixed point index theory. The interesting point is that a result for the existence of three positive solutions is given.
\end{abstract}


{\it Keywords: three positive solutions; singular problem; $\varphi$-Laplacian  }

\newtheorem{Thm}{Theorem}[section]
\newtheorem{Lem}[Thm]{Lemma}
\newtheorem{Cor}[Thm]{Corollary}
\newtheorem{Def}[Thm]{Definition}
\newtheorem{Ex}[Thm]{Example}
\newtheorem{Rem}[Thm]{Remark}
\newtheorem{Conj}[Thm]{Conjecture}
\newtheorem{Pro}[Thm]{Proposition}
\numberwithin{equation}{section}

\section{Introduction}

In this paper, we study the existence, nonexistence and multiplicity of positive solutions to the following problem
\begin{equation}\label{1.1}
\begin{cases}
(d(t)\varphi(c(t)u'))'+\lambda h(t)f(u)=0,~~t \in (0,1),\\
u(0)=u(1)=0,
\end{cases}
\end{equation}
where $\varphi:\mathbb{R} \to \mathbb{R}$ is an odd increasing homeomorphism, $c,d \in C([0,1],(0,\infty))$, $\lambda \in \mathbb{R}_+:=[0,\infty)$ is a parameter, $h \in C((0,1),\mathbb{R}_+)\setminus \{0\}$ and $f \in C(\mathbb{R}_+,\mathbb{R}_+)$ with $f(s)>0$ for $s >0$.

Throughout this paper, the homeomorphism $\varphi$ satisfies the following assumption:
\begin{itemize}
\item[$(A)$]
there exist increasing homeomorphisms $\psi_1,\psi_2: \mathbb{R}_+\to \mathbb{R}_+$ such that
    \begin{center}
    $\varphi(x)\psi_1(y)\le \varphi(xy) \le \varphi(x) \psi_2(y)~\hbox{for all }~x,y \in \mathbb{R}_+.$
    \end{center}
\end{itemize}
For convenience, we denote by $\mathcal{H}_\xi$ the set
$$\left\{g \in C((0,1),\mathbb{R}_+): \int_0^{\frac{1}{2}}\xi^{-1}\left(\int_s^{\frac{1}{2}} g(\tau)d\tau\right)ds+\int_{\frac{1}{2}}^1\xi^{-1}\left(\int_{\frac{1}{2}}^s g(\tau)d\tau\right)ds<\infty\right\},$$
where $\xi : \mathbb{R}_+ \to \mathbb{R}_+$ is an increasing homeomorphism, and we make the following notations:
\begin{center}
$\displaystyle f_0:=\lim_{s \to 0^+}\frac{f(s)}{\varphi(s)}~\hbox{and}~f_\infty:=\lim_{s \to \infty}\frac{f(s)}{\varphi(s)}.$
\end{center}
It is well known that \begin{equation}\label{inequality2}
\varphi^{-1}(x)\psi_2^{-1}(y) \le \varphi^{-1}(xy)\le  \varphi^{-1}(x)\psi_1^{-1}(y)~\hbox{for all}~x,y \in \mathbb{R}_+
\end{equation}
and $L^1(0,1)\cap C(0,1) \subsetneq \mathcal{H}_{\psi_1} \subseteq \mathcal{H}_\varphi \subseteq \mathcal{H}_{\psi_2}$ (see, e.g., \cite[Remark 1]{jeong2019existence}).

Problem \eqref{1.1} arises naturally in studying radial solutions to the following quasilinear elliptic equation defined on an annular domain
\begin{equation}\label{0}
\begin{cases}
\mathrm{div}(w(|x|)A(|\nabla v|)\nabla v)+\lambda k(|x|)f(v)=0~~\hbox{in}~ \Omega,\\
v=0~\hbox{on}~\partial \Omega,
\end{cases}
\end{equation}
where $\Omega=\{x \in \mathbb{R}^N: R_1<|x|<R_2\}$ with $N\ge 2$ and $0<R_1<R_2< \infty,$ $w \in C([R_1,R_2],(0,\infty))$ and $k \in C((R_1,R_2),\mathbb{R}_+)$. Indeed, applying change of variables
 \begin{center}
 $v(x)=u(t)$ and $|x|=(R_2-R_1)t+R_1$,
\end{center}
problem \eqref{0} is transformed into problem \eqref{1.1} with $$\varphi(s)=A(|s|)s, ~d(t)=w((R_2-R_1)t+R_1)((R_2-R_1)t+R_1)^{N-1},~c(t)=\frac{1}{R_2-R_1}$$ and $$h(t)=(R_2-R_1)((R_2-R_1)t+R_1)^{N-1}k((R_2-R_1)t+R_1)$$ (see, e.g., \cite{jeong2019existence,MR1946560}).

For $\varphi(s)=|s|^{p-2}s$ with $p>1$, problem \eqref{1.1} has been extensively studied in the literature (see \cite{MR903391,MR1894171,MR1440355,MR3231529,MR1825983,MR1643199,MR1064016} for $p=2$ and \cite{MR1880513,MR1003248,MR2514757,MR2847467,MR3911665,MR3361694,MR1423340,MR1400567,yang2018positive,MR3543777,MR3959306} for $p>1$). For example, when $h\in \mathcal{H}_{\varphi}$ and $c\equiv d \equiv 1$, Agarwal, L\"u and O'Regan \cite{MR1880513} studied the existence and multiplicity of positive solutions to problem \eqref{1.1} under various assumptions on $f_0$ and $f_\infty$. In \cite{MR2514757}, when $h\in \mathcal{H}_{\varphi},$ $c\equiv d \equiv 1$ and $f(s)$ satisfies $f(0)>0$ and $f_\infty=\infty$, it was shown that there exists $\lambda_*>0$ such that \eqref{1.1} has at least two positive solutions for $\lambda \in (0,\lambda_*)$, one positive solution for $\lambda =\lambda_*$ and no positive solution for $\lambda>\lambda_*$. Recently,  under the assumptions that $h \in C^1((0,1],(0,\infty))$ is strictly decreasing, $h(t) \le C t^{-\eta}$ for some $C>0$ and $\eta \in (0,1)$, $c\equiv d \equiv 1$, $f \in C(\mathbb{R}_+,\mathbb{R})$ is differentiable on $(0,\infty)$, $f(0)<0,$ $\displaystyle \limsup_{s \to 0^+} sf'(s) <\infty,$ $f'(s)>0$ for $s>0,$ $\displaystyle \lim_{s \to \infty} f(s)=\infty,$ $\displaystyle \lim_{s \to \infty} \frac{f(s)}{s^{p-1}}=0$ and $\displaystyle\frac{f(s)}{s^q}$ is nondecreasing on $[a,\infty)$ for some $a>0$ and $q \in (0,p-1),$ Shivaji, Sim and Son \cite{MR3543777} showed the uniqueness of positive solution to problem \eqref{1.1} for all large $\lambda>0$. For sign-changing weight $h$ satisfying $|h| \in \mathcal{H}_{\varphi}$ and $c\equiv d \equiv 1$, Xu \cite{MR3959306} studied the existence of a nontrivial solution to problem \eqref{1.1} for all small $\lambda>0$ under the assumptions that $f\in C(\mathbb{R},\mathbb{R})$ is nondecreasing,  $f(0)>0$ and $f_\infty \in (0,\infty)$.

For general $\varphi$ satisfying $(A)$, when $c\equiv d \equiv 1$ and $h\in L^1(0,1) \cap C(0,1)$, Bai and Chen \cite{MR3001526} studied the existence of at least two positive solutions for $\lambda$ belonging to an explicit interval under some assumptions on $f$ satisfying $f(0)=0$. When $c\equiv d \equiv 1$, $h \in \mathcal{H}_{\psi_1}$ and either $f_0=f_\infty=\infty$ or $f_0=f_\infty=0,$ Lee and Xu \cite{not-published} showed that there exist $\lambda^*\ge \lambda_*>0$ such that \eqref{1.1} has at least two positive solutions for $\lambda \in (0,\lambda_*)$, one positive solution for $\lambda \in [\lambda_*,\lambda^*]$ and no positive solution for $\lambda>\lambda_*$. In \cite{jeong2019existence}, for nonnegative nonlinearity $f=f(t,s)$ satisfying $f(t_0,0)>0$ for some $t_0\in [0,1]$ and $h \in \mathcal{H}_\varphi$, the existence of an unbounded solution component was shown and, under several assumptions on $f$ at $\infty$, the existence, nonexistence and multiplicity of positive solutions were studied.

For more general $\varphi$ which does not satisfy $(A)$, when $c\equiv d\equiv 1$ and $0\le h \in L^1(0,1)$ with $h \not \equiv 0$, Kaufmann and Milne \cite{MR3759527} proved the existence of positive solution to problem \eqref{1.1} for all $\lambda>0$ under the assumptions on $f$ which induces the sublinear nonlinearity provided $\varphi(s)=|s|^{p-1}s$ with $p>1$. For other interesting results, we refer the reader to \cite{MR2393405,MR2502700,MR3850394} and the references therein.

The concavity of solutions plays a crucial role in defining a suitable cone so that the solution operator is well defined (see, e.g., \cite{MR1880513,MR2514757,not-published} and the references therein). When $c \equiv d \equiv 1$, it is easy to see that solutions to problem \eqref{1.1} are concave functions on $[0,1]$. However, it is not clear that the solutions are concave functions on $[0,1]$, unless $c \equiv d \equiv 1$. In order to overcome this difficulty, a lemma (\cite[Lemma 2.4]{MR1946560}) was proved, so that various results for the existence, nonexistence and multiplicity of positive solutions to problem \eqref{1.1} were proved in \cite{MR1946560} when $d$ is nondecreasing on $[0,1]$ and $h \in C[0,1]$ satisfies $h \not \equiv 0$ on any subinterval of $[0,1].$

The aim of this paper is to improve on the results in \cite{MR1946560} by assuming the weaker hypotheses on $h$ and $d$ than those in \cite{MR1946560}. More precisely, the monotonicity of $d$ is not assumed, and the weight function $h$ may not be $L^1(0,1)$ and it can be vanished in some subinterval of $(0,1).$ Furthermore, a result for the existence of three positive solutions is given, which does not appear in \cite{MR1946560}.

The rest of this paper is organized as follows. In Section 2, we establish preliminaries which are essential for proving our results in this paper. In Section 3, the main results are proved and an example to illustrate the results obtained in this paper is provided. Finally, the summary of this paper and future work are given in Section 4.

\section{Preliminaries}

In this section we give preliminaries which are essential for proving our results in this paper.

First, we introduce a solution operator related to problem \eqref{1.1}. Let $g \in \mathcal{H}_{\varphi}\setminus\{0\}$ be given, and define a function $\nu_g:(0,1) \to \mathbb{R}$ by
 \begin{center}
 $\nu_g(t)=\nu_g^1(t)-\nu_g^2(t)$ for $t \in (0,1)$.
\end{center}
Here $\nu_g^1$ and $\nu_g^2$ are the functions defined by, for $t\in (0,1),$
$$\displaystyle \nu_g^1(t)=\int_0^t \frac{1}{c(s)}\varphi^{-1} \left(\frac{1}{d(s)}\int_s^t  g(\tau)d\tau\right)ds$$ and $$\displaystyle \nu_g^2(t)=\int_t^1 \frac{1}{c(s)} \varphi^{-1} \left(\frac{1}{d(s)}\int_t^s g(\tau)d\tau\right)ds.$$
It is easy to see that $\nu_g^1$ and $\nu_g^2$ are continuous and monotone functions on $(0,1)$ satisfying $$\displaystyle \lim_{t\to 0+}\nu_g^1(t)=\lim_{t\to 1-}\nu_g^2(t)=0~\hbox{and} ~\displaystyle\lim_{t\to 1-}\nu_g^1(t)=\lim_{t\to 0+}\nu_g^2(t)\in (0,\infty].$$
 Thus there exists an interval $[\sigma_g^1,\sigma_g^2] \subsetneq (0,1)$ satisfying $\nu_g(\sigma)=0$ for all $\sigma \in [\sigma_g^1,\sigma_g^2]$ (see \cite{jeong2019existence}).

Define a function $T : \mathcal{H}_{\varphi} \to C[0,1]$ by $T(0)=0$ and, for $g \in \mathcal{H}_{\varphi}\setminus \{0\}$,
\begin{equation}\label{operator1}
 T(g)(t)=\left\{
            \begin{array}{ll}
             \int_0^t \frac{1}{c(s)}\varphi^{-1} \left(\frac{1}{d(s)}\int_s^{\sigma} g(\tau) d\tau\right)ds, & \hbox{if}~0\le t \le \sigma, \\
              \int_t^1 \frac{1}{c(s)}\varphi^{-1} \left(\frac{1}{d(s)}\int_{\sigma}^s g(\tau) d\tau\right)ds, & \hbox{if}~\sigma \le t \le 1.
            \end{array}
          \right.
\end{equation}
where $\sigma=\sigma(g)$ is a zero of $\nu_g$ in $(0,1)$, i.e.,
\begin{equation}\label{maximum-linear}
\int_0^{\sigma} \frac{1}{c(s)}\varphi^{-1} \left(\frac{1}{d(s)}\int_s^{\sigma} g(\tau)d\tau\right)ds=\int_\sigma^1 \frac{1}{c(s)}\varphi^{-1} \left(\frac{1}{d(s)}\int_{\sigma}^s g(\tau)d\tau\right)ds.
\end{equation}
We notice that, although $\sigma=\sigma(g)$ is not necessarily unique, the right hand side of the equality in \eqref{operator1} does not depend on a particular choice of $\sigma$. In other words, $T(g)$ is independent of the choice of $\sigma \in [\sigma_g^1,\sigma_g^2]$ (see, e.g.,  \cite{jeong2019existence} or \cite{MR1946560}).

For $g \in \mathcal{H}_\varphi,$ consider the following problem
\begin{equation}\label{2.1}
\begin{cases}
-(d(t)\varphi(c(t)u'))'=g(t),~~t \in (0,1),\\
u(0)=u(1)=0.
\end{cases}
\end{equation}
For $g=0,$ \eqref{2.1} has a unique zero solution due to the boundary conditions.

The usual maximum norm in a Banach space $C[0,1]$ is denoted by
 \begin{center}
 $\displaystyle\|u\|_\infty:=\max_{t\in [0,1]} |u(t)|$ for $u \in C[0,1]$
 \end{center}
 and let
$$\displaystyle \rho_1:=\frac{c_0}{\|c\|_\infty}\frac{\psi_2^{-1}\left(\frac{1}{\|d\|_\infty}\right)}{\psi_1^{-1}\left(\frac{1}{d_0}\right)}\in (0,1],$$
where $\displaystyle c_0:=\min_{t\in [0,1]}c(t)>0$ and $\displaystyle d_0:=\min_{t\in [0,1]}d(t)>0$. Recently,  without the monotonicity of $d$, a result similar to \cite[Lemma 2.4]{MR1946560} was proved in \cite{jeong2019existence}.

\begin{Lem}\label{nonconcavity2} \textnormal{(\cite[Lemma 2]{jeong2019existence})}
 Assume that $(A)$ and $g\in \mathcal{H}_\varphi$ hold.  Then $T(g)$ is the unique solution to problem \eqref{2.1} and
 \begin{center}
 $T(g)(t) \ge \min\{t, 1-t\} \rho_1\|T(g)\|_\infty$ for $t \in [0,1].$
 \end{center}
\end{Lem}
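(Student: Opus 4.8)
The plan is to prove the two assertions separately: that $T(g)$ is the unique solution of \eqref{2.1}, and the pointwise lower bound.

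For the first assertion, existence is a direct computation. Fixing a zero $\sigma$ of $\nu_g$ and differentiating the two branches of \eqref{operator1}, on $(0,\sigma)$ one gets $c(t)T(g)'(t)=\varphi^{-1}\!\big(\tfrac{1}{d(t)}\int_t^{\sigma}g\big)$, hence $d(t)\varphi(c(t)T(g)'(t))=\int_t^{\sigma}g(\tau)\,d\tau$, a $C^1$ function on $(0,1)$ with derivative $-g$; on $(\sigma,1)$ the oddness of $\varphi$ reverses a sign and produces the same identity (with the convention $\int_t^{\sigma}=-\int_{\sigma}^{t}$), so $-(d\,\varphi(c\,T(g)'))'=g$ on all of $(0,1)$. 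Continuity of $T(g)$ across $\sigma$ is exactly \eqref{maximum-linear}, the boundary values vanish by inspection, and $T(g)\in C[0,1]$ is built into the definition of $T$ (that is where $g\in\mathcal{H}_{\varphi}$ is used). For uniqueness, given any solution $u$ I would integrate the equation from an interior point to obtain $d(t)\varphi(c(t)u'(t))=\Psi(t)$ with $\Psi$ continuous and nonincreasing on $(0,1)$; the conditions $u(0)=u(1)=0$ force $\Psi$ to vanish at some $\sigma\in(0,1)$, for otherwise $u'$ has a constant sign and $u$ is strictly monotone on $(0,1)$, contradicting the boundary values. Then $\Psi(t)=\int_t^{\sigma}g$, so $u'(t)=\tfrac{1}{c(t)}\varphi^{-1}\!\big(\tfrac{1}{d(t)}\int_t^{\sigma}g\big)$, and integrating this against $u(0)=0$ on $[0,\sigma]$ and $u(1)=0$ on $[\sigma,1]$ recovers precisely \eqref{operator1}; hence $u=T(g)$.

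For the lower bound, write $u:=T(g)$. The formula \eqref{operator1} gives $u\ge 0$ with $u'\ge 0$ on $(0,\sigma)$ and $u'\le 0$ on $(\sigma,1)$, so $\|u\|_{\infty}=u(\sigma)$. The crux is a quasi-monotonicity estimate for $u'$: for $0<s_1\le s_2<\sigma$, using $\int_{s_1}^{\sigma}g\ge\int_{s_2}^{\sigma}g$, the bounds $c_0\le c\le\|c\|_{\infty}$ and $d_0\le d\le\|d\|_{\infty}$, and the two-sided inequality \eqref{inequality2} to pull out the $d$-factors while keeping $\varphi^{-1}\!\big(\int_{s_2}^{\sigma}g\big)$ as a common factor, one gets
\[
u'(s_1)\ \ge\ \frac{1}{\|c\|_{\infty}}\,\varphi^{-1}\!\Big(\int_{s_2}^{\sigma}g\Big)\,\psi_2^{-1}\!\Big(\tfrac{1}{\|d\|_{\infty}}\Big),\qquad u'(s_2)\ \le\ \frac{1}{c_0}\,\varphi^{-1}\!\Big(\int_{s_2}^{\sigma}g\Big)\,\psi_1^{-1}\!\Big(\tfrac{1}{d_0}\Big),
\]
and dividing yields $u'(s_1)\ge\rho_1\,u'(s_2)$ (the case $u'(s_2)=0$ being trivial). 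The mirror computation on $(\sigma,1)$, where $s\mapsto\int_{\sigma}^{s}g$ is nondecreasing, gives $|u'(s_2)|\ge\rho_1\,|u'(s_1)|$ for $\sigma<s_1\le s_2<1$. This is the step that manufactures the constant $\rho_1$, and I expect it to be the main obstacle; the non-monotonicity of $c$ and $d$ and the endpoint integrability (coming from $g\in\mathcal{H}_{\varphi}$) are the delicate points there.

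Finally I would pass from the quasi-monotonicity to the pointwise bound by an averaging argument. For $t\in(0,\sigma]$ and $s\in[t,\sigma]$, averaging $u'(s)\le\rho_1^{-1}u'(r)$ over $r\in[0,t]$ gives $u'(s)\le\rho_1^{-1}u(t)/t$; integrating over $s\in[t,\sigma]$ and using $\sigma<1$ rearranges to $u(t)\ge\rho_1\,t\,u(\sigma)$. The symmetric argument on $[\sigma,1]$ gives $u(t)\ge\rho_1(1-t)u(\sigma)$ there. Combining the two and checking the two remaining sub-cases ($t\le\tfrac12$ versus $t>\tfrac12$, where one uses $\rho_1 t\ge\rho_1(1-t)$ or vice versa) yields $T(g)(t)\ge\min\{t,1-t\}\,\rho_1\,\|T(g)\|_{\infty}$ for all $t\in[0,1]$, as claimed.
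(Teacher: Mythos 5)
Your proof is correct, but be aware that this paper contains no proof of the lemma to compare against: it is imported verbatim as Lemma 2 of \cite{jeong2019existence}. Your reconstruction is sound on both counts. For uniqueness, the sign/intermediate-value argument producing a zero $\sigma$ of $d\varphi(cu')$ is right; just note explicitly that continuity of $u$ at $\sigma$ forces \eqref{maximum-linear}, so the recovered $\sigma$ is an admissible zero of $\nu_g$ and the (stated) independence of $T(g)$ from the choice of $\sigma$ then gives $u=T(g)$ — a half-line you elide. For the lower bound, your route via the quasi-monotonicity $u'(s_1)\ge\rho_1 u'(s_2)$ for $s_1\le s_2<\sigma$ plus averaging is a derivative-level repackaging of the more common direct estimate: bound $u(t)=\int_0^t\frac{1}{c(s)}\varphi^{-1}\left(\frac{1}{d(s)}\int_s^\sigma g\right)ds$ from below and $\|u\|_\infty=u(\sigma)$ from above by pulling the factors $c,d$ out with \eqref{inequality2}, then use that $s\mapsto\varphi^{-1}\left(\int_s^\sigma g\right)$ is nonincreasing, so $\frac{1}{t}\int_0^t\ge\frac{1}{\sigma}\int_0^\sigma$ and $\sigma\le 1$ give $u(t)\ge\rho_1\,t\,u(\sigma)$ at once; both arguments produce exactly the same constant $\rho_1$, and yours costs one extra step (the averaging) while avoiding the monotone-rearrangement observation. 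Two cosmetic points: the degenerate case $\int_{s_2}^\sigma g=0$ is indeed trivial since then $u'(s_2)=0$; and your closing case split between $t\le\frac12$ and $t>\frac12$ is unnecessary, because $\min\{t,1-t\}\le t$ and $\min\{t,1-t\}\le 1-t$ make each one-sided bound imply the stated inequality on its own half-interval.
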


From now on, we assume $h \in \mathcal{H}_\varphi\setminus \{0\}$. Let \begin{center}
$\displaystyle\mathcal{A}_h:=\{x \in (0,1) : h(y)=0$ for all $y \in (0,x)\}$ and $\displaystyle\mathcal{B}_h:=\{x \in (0,1) : h(y)=0$ for all $y \in (x,1)\}$.
\end{center}
For convenience, we use the following notations:

\begin{center}
$\displaystyle\alpha_h:=\max \mathcal{A}_h$ if $\mathcal{A}_h \neq \emptyset$ and $\alpha_h:=0$ if $\mathcal{A}_g =\emptyset,$ where $\displaystyle\mathcal{A}_h=\{x \in (0,1) : h(y)=0$ for all $y \in (0,x)\}$;
\end{center}
\begin{center}
$\displaystyle\beta_h:=\min \mathcal{B}_h$ if $\mathcal{B}_h \neq \emptyset$ and $\beta_h:=1$ if $\mathcal{B}_g =\emptyset,$ where $\displaystyle\mathcal{B}_h=\{x \in (0,1) : h(y)=0$ for all $y \in (x,1)\}$;
\end{center}
\begin{center}
$\displaystyle \bar\alpha_h:=\max\{x \in (0,1] : h(y)>0$ for all $y \in (\alpha_h,x)\}$;
\end{center}
\begin{center}
 $\displaystyle \bar\beta_h:=\max\{x \in [0,1) : h(y)>0$ for all $y \in (x,\beta_h)\}$;
\end{center}
\begin{center}
$\displaystyle\gamma_h^1:=4^{-1}(3\alpha_h+\bar\alpha_h)$ and $\displaystyle \gamma_h^2:=4^{-1}(\bar\beta_h+3\beta_h)$.
\end{center}
Since $h \not \equiv 0,$ it follows that
\begin{center}
$\displaystyle 0 \le \alpha_h<\gamma_h^1<\gamma_h^2 <\beta_h \le 1$.
\end{center}

Let $\mathcal{K}:=\{ u \in C([0,1],\mathbb{R}_+) : u(t) \ge \rho_h \|u\|_\infty~\hbox{for}~t \in [\gamma_h^1,\gamma_h^2] \}.$
Here
\begin{center}
$\displaystyle \rho_h:=\rho_1\min\{\gamma_h^1,1-\gamma_h^2\}\in (0,1)$.
\end{center}
Then $\mathcal{K}$ is a cone in $C[0,1].$  For $r>0,$ let $\mathcal{K}_r:=\{u \in \mathcal K~:~\|u\|_\infty<r\}$, $\partial \mathcal{K}_r:=\{u \in \mathcal K~:~\|u\|_\infty=r\}$ and $\overline{\mathcal{K}}_r:=\mathcal{K}_r \cup \partial \mathcal{K}_r$.

Define a function $F: \mathbb{R}_+\times \mathcal{K} \to C(0,1)$ by
 \begin{center}
 $F(\lambda,u)(t)=\lambda h(t)f(u(t))$ for $(\lambda,u) \in \mathbb{R}_+\times \mathcal{K}$ and $t \in (0,1).$
\end{center}
Clearly, $F(\lambda,u) \in \mathcal{H}_\varphi$ for any $(\lambda,u) \in \mathbb{R}_+\times \mathcal{K}$.

Now we define an operator $H : \mathbb{R}_+ \times \mathcal{K} \to C[0,1]$ by
\begin{center}
$H(\lambda,u)\equiv T(F(\lambda,u))$ for $(\lambda,u) \in \mathbb{R}_+\times \mathcal{K}$.
\end{center}
 That is, for $(\lambda,u) \in \mathbb{R}_+\times \mathcal{K},$
\begin{equation*}\label{operator2}
H(\lambda,u)(t)=\left\{
            \begin{array}{ll}
             \int_0^t \frac{1}{c(s)}\varphi^{-1} (\frac{1}{d(s)}\int_s^{\sigma} F(\lambda,u)(\tau)d\tau)ds, & \hbox{if}~0\le t \le \sigma, \\
              \int_t^1 \frac{1}{c(s)}\varphi^{-1} (\frac{1}{d(s)}\int_{\sigma}^s F(\lambda,u)(\tau) d\tau)ds, & \hbox{if}~\sigma \le t \le 1,
            \end{array}
          \right.
\end{equation*}
where $\sigma=\sigma(\lambda,u)$ is a number satisfying
\begin{equation}\label{maximum}
\int_0^{\sigma} \frac{1}{c(s)}\varphi^{-1} \left(\frac{1}{d(s)}\int_s^{\sigma} F(\lambda,u)(\tau)d\tau\right)ds=\int_\sigma^1 \frac{1}{c(s)}\varphi^{-1} \left(\frac{1}{d(s)}\int_{\sigma}^s F(\lambda,u)(\tau)d\tau\right)ds.
\end{equation}

\begin{Rem}\label{solution_property}
\begin{itemize}
\item[$(1)$] For any $(\lambda,u) \in \mathbb{R}_+ \times \mathcal{K},$ by Lemma \ref{nonconcavity2}, $H (\mathbb{R}_+ \times \mathcal{K}) \subseteq \mathcal{K}$.

    \item[$(2)$] It is easy to see that \eqref{1.1} has a solution if and only if $H(\lambda,\cdot)$ has a fixed point in $\mathcal{K}.$
\item[$(3)$] Since $H(0,u) = 0$ for all $u \in \mathcal{K}$, $0$ is a unique solution to problem \eqref{1.1} with $\lambda=0$.
By Lemma \ref{nonconcavity2}, any nonzero solution $u$ to problem \eqref{1.1} is positive one, i.e., $u(t)>0$ for all $t \in (0,1).$
\end{itemize}
    \end{Rem}

By the argument similar to those in the proof of \cite[Lemma 3]{MR1880513}, it can be proved that $H$ is completely continuous on $\mathbb{R}_+\times \mathcal{K}$ (see also \cite[Lemma 3.3]{MR2417920}). We omit the proof of it.

\begin{Lem}\label{complete continuity2} $\textnormal{(\cite[Lemma 4]{jeong2019existence})}$
 Assume that $(A)$ and $g\in \mathcal{H}_\varphi$ hold. Then the operator $H: \mathbb{R}_+\times \mathcal{K} \to \mathcal{K}$ is completely continuous, i.e., compact and continuous.
\end{Lem}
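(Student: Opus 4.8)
The plan is to follow the standard Krasnoselskii-type argument for complete continuity of the superposition/integral operator, as in \cite[Lemma 3]{MR1880513}, adapted to the $\varphi$-Laplacian with the weight $h \in \mathcal{H}_\varphi$. First I would show that $H$ maps bounded sets into bounded sets: given a bounded $B \subseteq \mathbb{R}_+ \times \mathcal{K}$, say with $\lambda \le \Lambda$ and $\|u\|_\infty \le R$, set $M := \Lambda \max_{s \in [0,R]} f(s)$, so that $0 \le F(\lambda,u)(t) \le M h(t)$ pointwise. Then using assumption $(A)$ (in the form \eqref{inequality2}) together with $c \ge c_0$ and $d \ge d_0$, each of the two integrals defining $H(\lambda,u)$ is dominated by $c_0^{-1}\varphi^{-1}(d_0^{-1})\,\psi_1^{-1}(M)$ times the quantity $\int_0^{1/2}\varphi^{-1}\!\big(\int_s^{1/2} h\big)ds + \int_{1/2}^1 \varphi^{-1}\!\big(\int_{1/2}^s h\big)ds$, which is finite precisely because $h \in \mathcal{H}_\varphi$. (One has to split at $\sigma$ and note $\int_s^\sigma h \le \int_s^{1/2}h + \int_{1/2}^\sigma h$ etc.; the monotonicity of $\varphi^{-1}$ and $\psi_i^{-1}$ makes the bookkeeping routine.) This gives a uniform bound on $\|H(\lambda,u)\|_\infty$ over $B$.

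Next I would establish equicontinuity of $H(B)$, which together with the uniform bound and the Arzel\`a--Ascoli theorem yields compactness (relative compactness of $H(B)$ in $C[0,1]$). For this one estimates $|H(\lambda,u)(t_1) - H(\lambda,u)(t_2)|$ for $t_1 < t_2$; the integrand $\frac{1}{c(s)}\varphi^{-1}\!\big(\frac{1}{d(s)}\int_{\cdot}^{\cdot} F(\lambda,u)\big)$ is, by the domination above, bounded by the fixed $L^1$-type function $s \mapsto c_0^{-1}\varphi^{-1}(d_0^{-1})\psi_1^{-1}(M)\,\varphi^{-1}\!\big(\int_s^{1/2}h\big)$ (and its mirror image), which is integrable near the endpoints; hence the absolute continuity of this majorant's integral gives equicontinuity uniformly in $(\lambda,u) \in B$. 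One should handle the (mild) $\sigma$-dependence by noting $\sigma = \sigma(\lambda,u) \in (0,1)$ and that the integrand bound is independent of $\sigma$, so the standard tail estimate near $0$ and $1$ goes through; on compact subintervals of $(0,1)$ the integrands are uniformly bounded, giving equicontinuity there directly.

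Finally I would prove continuity of $H$. Suppose $(\lambda_n, u_n) \to (\lambda,u)$ in $\mathbb{R}_+ \times \mathcal{K}$. Then $F(\lambda_n,u_n) \to F(\lambda,u)$ pointwise on $(0,1)$ (by continuity of $f$ and uniform convergence $u_n \to u$), and all are dominated by $M h$ with a common $M$ as above. Since $H(B)$ is relatively compact (by the previous step, applied to the bounded set $B = \{(\lambda_n,u_n)\} \cup \{(\lambda,u)\}$), it suffices to show that every convergent subsequence of $H(\lambda_n,u_n)$ has limit $H(\lambda,u)$; passing to such a subsequence, the dominated convergence theorem applied to the inner integrals $\int_s^{\sigma_n} F(\lambda_n,u_n)$ (with $\sigma_n \to \sigma$ along a further subsequence, using \eqref{maximum} and continuity), together with continuity of $\varphi^{-1}$, identifies the limit as $H(\lambda,u)(t)$ for each $t$, and uniform convergence follows from the relative compactness. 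I expect the main technical nuisance to be the dependence of $\sigma = \sigma(\lambda,u)$ on $(\lambda,u)$ and its possible nonuniqueness: one must argue (as recorded after \eqref{operator1}) that $H(g)$ is independent of the choice of zero of $\nu_g$, and that any choice $\sigma_n$ stays in a compact subinterval of $(0,1)$ and subconverges, so that passing to the limit is legitimate. Everything else is the standard domination-plus-Arzel\`a--Ascoli scheme, which is why the authors, quite reasonably, omit it.
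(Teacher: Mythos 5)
Your overall scheme---pointwise domination of $F(\lambda,u)$ by $Mh$, Arzel\`a--Ascoli for compactness, and dominated convergence together with the independence of $T(g)$ from the choice of $\sigma$ for continuity---is exactly the standard argument the paper points to by citing \cite[Lemma 3]{MR1880513}, \cite[Lemma 3.3]{MR2417920} and \cite[Lemma 4]{jeong2019existence} (the paper itself omits the proof). However, two specific steps in your write-up fail precisely in the singular case $h\in\mathcal{H}_\varphi\setminus L^1(0,1)$ that the lemma is designed to cover. First, your claimed $\sigma$-independent majorant is not one: for $s\le t\le\sigma$ with $\sigma>\tfrac12$ one only gets $\int_s^\sigma F(\lambda,u)\,d\tau\le M\left(\int_s^{1/2}h\,d\tau+\int_{1/2}^\sigma h\,d\tau\right)$, and the second term is not controlled by $\int_s^{1/2}h\,d\tau$ and is unbounded as $\sigma\to1^-$ when $h\notin L^1(\tfrac12,1)$; so the function $s\mapsto c_0^{-1}\varphi^{-1}(d_0^{-1})\psi_1^{-1}(M)\,\varphi^{-1}\bigl(\int_s^{1/2}h\,d\tau\bigr)$ does not dominate the integrand, and the same defect infects the boundedness estimate if you ``split at $\sigma$'' as described (also, the constant should come out as, e.g., $c_0^{-1}\psi_1^{-1}(M/d_0)$ from one application of \eqref{inequality2}; the product $\varphi^{-1}(d_0^{-1})\psi_1^{-1}(M)$ does not follow). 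Second, your fallback claim that $\sigma_n=\sigma(\lambda_n,u_n)$ stays in a compact subinterval of $(0,1)$ uniformly over a bounded set is false in general: the zero set of $\nu_g$ is compactly contained in $(0,1)$ for each fixed $g$, but over a bounded subset of $\mathcal{K}$ one can force $\sigma_n\to1$ (take $h$ non-integrable at $1$, $f(0)=0$ with $f_*(m)/f^*(m)\to0$ as $m\to0$, and $u_n\in\mathcal{K}$ with $\|u_n\|_\infty\to0$ whose maximum is attained near $t=1$); so ``$\sigma_n$ subconverges in a compact subinterval'' cannot be the basis of the equicontinuity and limit-identification steps.

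The missing idea that repairs both points is to use the two representations of $\|H(\lambda,u)\|_\infty=H(\lambda,u)(\sigma)$ in \eqref{maximum} according to where $\sigma$ lies: if $\sigma\ge\tfrac12$, the right-hand representation gives $\int_\sigma^s F\le M\int_{1/2}^s h\,d\tau$ for $s\ge\sigma$, hence $\|H(\lambda,u)\|_\infty\le c_0^{-1}\psi_1^{-1}(M/d_0)\int_\sigma^1\varphi^{-1}\bigl(\int_{1/2}^s h\,d\tau\bigr)ds$, and symmetrically if $\sigma\le\tfrac12$. This yields the uniform bound and, more importantly, shows that $\|H(\lambda,u)\|_\infty$ is uniformly small whenever $\sigma(\lambda,u)$ is near $0$ or $1$ (it is bounded by a tail of a convergent integral). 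Equicontinuity then follows from a dichotomy: for $\sigma\in[\theta,1-\theta]$ the inner integrals never cross into the $\theta$-neighbourhoods of the endpoints, so a genuine $\sigma$-independent integrable majorant exists; for $\sigma\notin[\theta,1-\theta]$ the oscillation of $H(\lambda,u)$ is already bounded by its (uniformly small) sup-norm. The same dichotomy legitimizes the limit passage in your continuity argument when $\sigma_n$ accumulates at an endpoint, since in that case $H(\lambda_n,u_n)\to0$ and one checks $F(\lambda,u)=0$, hence $H(\lambda,u)=0$; with these corrections the rest of your outline goes through.
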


Finally, we recall a well-known theorem of the fixed point index theory.

\begin{Thm}\label{thm_cone} $\textnormal{(see, e.g., \cite{MR787404,MR959889})}$  Assume that, for some $r>0,$ $T_1:\overline{\mathcal{K}}_r \to
\mathcal{K}$ is completely continuous on $\overline{\mathcal{K}}_r$. Then
\begin{center}
$(i)$ if $\|T_1(u)\|_\infty > \|u\|_\infty$ for $u
\in \partial \mathcal K_r$, then $i(T_1,\mathcal K_r,\mathcal{K})=0;$
\end{center}
\begin{center}
 $(ii)$ if $\|T_1(u)\|_\infty < \|u\|_\infty$ for $u
\in \partial \mathcal K_r$, then $i(T_1,\mathcal K_r,\mathcal{K})=1.$
\end{center}

\end{Thm}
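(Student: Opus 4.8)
The plan is to deduce both claims directly from the standard axioms of the fixed point index for completely continuous self-maps of a cone (normalization, homotopy invariance, and the solution/excision property), as developed in the references recalled here; all the real work lies in exhibiting admissible homotopies, i.e. homotopies that remain fixed-point free on $\partial\mathcal{K}_r$ for every parameter value.

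For part $(ii)$, I would use the straight-line homotopy $\mathcal{G}(\tau,u):=\tau T_1(u)$ for $\tau\in[0,1]$ and $u\in\overline{\mathcal{K}}_r$, which is completely continuous since $T_1$ is. The key point is that $\mathcal{G}(\tau,\cdot)$ has no fixed point on $\partial\mathcal{K}_r$: if $u=\tau T_1(u)$ for some $u\in\partial\mathcal{K}_r$ and $\tau\in[0,1]$, then $\tau\neq 0$ (otherwise $u=0\notin\partial\mathcal{K}_r$), so
\[
\|u\|_\infty=\tau\|T_1(u)\|_\infty\le\|T_1(u)\|_\infty<\|u\|_\infty,
\]
a contradiction. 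Hence, by homotopy invariance, $i(T_1,\mathcal{K}_r,\mathcal{K})=i(\mathcal{G}(0,\cdot),\mathcal{K}_r,\mathcal{K})=i(0,\mathcal{K}_r,\mathcal{K})$, and the latter equals $1$ by the normalization property, since $0\in\mathcal{K}_r$.

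For part $(i)$, I would fix some $e\in\mathcal{K}$ with $\|e\|_\infty=1$ (such $e$ exists, e.g. a suitably normalized tent-type function) and consider the homotopy $\mathcal{G}(\tau,u):=T_1(u)+\tau e$ for $\tau\ge 0$. First, $\mathcal{G}(\tau,\cdot)$ has no fixed point on $\partial\mathcal{K}_r$ for any $\tau\ge 0$: if $u=T_1(u)+\tau e$, then $u-T_1(u)=\tau e\in\mathcal{K}$, so $u(t)\ge T_1(u)(t)\ge 0$ for all $t\in[0,1]$ (here we use that $\mathcal{K}$ consists of nonnegative functions), whence $\|u\|_\infty\ge\|T_1(u)\|_\infty>\|u\|_\infty$, a contradiction. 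Second, since $T_1$ is completely continuous on $\overline{\mathcal{K}}_r$, the set $T_1(\overline{\mathcal{K}}_r)$ is bounded, say $\|T_1(u)\|_\infty\le M_0$ for all $u\in\overline{\mathcal{K}}_r$; then for $\tau^*>r+M_0$ the equation $u=T_1(u)+\tau^* e$ has no solution in $\overline{\mathcal{K}}_r$ at all, because such a $u$ would give $\tau^*=\|\tau^* e\|_\infty=\|u-T_1(u)\|_\infty\le\|u\|_\infty+\|T_1(u)\|_\infty\le r+M_0$. Therefore $i(\mathcal{G}(\tau^*,\cdot),\mathcal{K}_r,\mathcal{K})=0$ by the solution property, and since the homotopy $\tau\mapsto\mathcal{G}(\tau,\cdot)$, $\tau\in[0,\tau^*]$, is completely continuous and fixed-point free on $\partial\mathcal{K}_r$, homotopy invariance yields $i(T_1,\mathcal{K}_r,\mathcal{K})=i(\mathcal{G}(\tau^*,\cdot),\mathcal{K}_r,\mathcal{K})=0$.

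The only genuinely delicate point is checking admissibility of the homotopies — that they stay fixed-point free on $\partial\mathcal{K}_r$ for all parameters — together with the observation in part $(i)$ that $u-T_1(u)\in\mathcal{K}$ forces the pointwise inequality $u\ge T_1(u)\ge 0$ on $[0,1]$, so that the cone ordering is compatible with the sup-norm. Beyond that, everything reduces to a direct invocation of the axioms of the fixed point index recalled from \cite{MR787404,MR959889}, so no further computation is needed.
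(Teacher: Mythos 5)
Your argument is correct, and it is exactly the standard derivation of these norm-type index computations from the axioms of the fixed point index (normalization, homotopy invariance, solution property) as found in the cited references of Deimling and Guo--Lakshmikantham; the paper itself offers no proof of Theorem 2.4, citing those sources instead. The only detail worth making explicit is the existence of $e\in\mathcal{K}$ with $\|e\|_\infty=1$, and here one can simply take $e\equiv 1$, since $1\ge \rho_h\cdot 1$ on $[\gamma_h^1,\gamma_h^2]$ because $\rho_h\in(0,1)$; with that noted, both homotopy admissibility checks and the large-$\tau^*$ nonexistence argument are complete and correct.
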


\section{Main Results}

Throughout this section, we assume $h \in \mathcal{H}_{\psi_1}\setminus \{0\}$. For convenience, we use the following notations in this section:
\begin{center}
$\displaystyle \gamma_h:=\frac{\gamma_h^1+\gamma_h^2}{2}$,
where $\displaystyle\gamma_h^1=\frac{3\alpha_h+\bar \alpha_h}{4}$ and $\displaystyle \gamma_h^2=\frac{\bar\beta_h+3\beta_h}{4};$
\end{center}
\begin{center}
$\displaystyle A_1:=\frac{1}{\|c\|_\infty}\psi_2^{-1}(\frac{1}{\|d\|_\infty})\min \left\{\int_{\gamma_h^1}^{\gamma_h}\psi_2^{-1}\left(\int_s^{\gamma_h}h(\tau)d\tau\right)ds, \int_{\gamma_h}^{\gamma_h^2}\psi_2^{-1}\left(\int^s_{\gamma_h}h(\tau)d\tau\right)ds\right\};$
\end{center}
 \begin{center}
 $\displaystyle A_2:=\frac{1}{c_0}\psi_1^{-1}(\frac{1}{d_0})\max \left\{\int_0^{\gamma_h}\psi_1^{-1}\left(\int_s^{\gamma_h}h(\tau)d\tau\right)ds, \int_{\gamma_h}^{1}\psi_1^{-1}\left(\int^s_{\gamma_h}h(\tau)d\tau\right)ds\right\}.$
\end{center}
Since $0 \le \alpha_h <\gamma_h^1 <\gamma_h <\gamma_h^2 <\beta_h \le 1,$ $A_1>0$ and $A_2>0.$ Define continuous functions $R_1, R_2 : (0,\infty) \to (0,\infty)$ by
\begin{center}
$\displaystyle R_1(m):=\frac{1}{f_*(m)}\varphi(\frac{m}{A_1})$
and $\displaystyle R_2(m):=\frac{1}{f^*(m)}\varphi(\frac{m}{A_2})$ for $m \in (0,\infty)$.
\end{center}
Here, $f_*(m):=\min\{f(y): {\rho_h m\le y\le m}\}$ and $f^*(m):=\max\{f(y): {0\le y\le m}\}$ for $m \in \mathbb{R}_+$.

\begin{Rem}\label{RemA}
\begin{itemize}
\item[$(1)$] By \eqref{inequality2}, $\psi_2^{-1}(y) \le \psi_1^{-1}(y)$ for all $y \in \mathbb{R}_+,$ and it follows that
\begin{center}
$0<A_1<A_2$ and $0<R_2(m)<R_1(m)$ for all $m \in (0,\infty).$
\end{center}
\item[$(2)$] For any $l \in C(\mathbb{R}_+,\mathbb{R}_+)$,
 $\displaystyle l_a:=\lim_{m\to a}\frac{l(m)}{\varphi(m)}$ for $a \in \{0,\infty\}$.
 It is not hard to prove that
\begin{center}
$(f_*)_a=(f^*)_a=0$ if $f_a=0$ and $(f_*)_a=(f^*)_a=\infty$ if $f_a=\infty$.
\end{center}

For convenience of readers, we give the proofs. First, we show that $f_0=0$ implies $(f_*)_0=(f^*)_0=0$. Let $\epsilon>0$ be given and $f_0=0$ be assumed. Then there exists $\delta>0$ such that  for any $s \in (0,\delta),$ $0<\frac{f(s)}{\varphi(s)} < \epsilon$. Since $f \in C(\mathbb{R}_+,\mathbb{R}_+)$ with $f(s)>0$ for $s \in (0,\infty),$  by the extreme value theorem, for any $s \in (0,\infty),$ $f^*(s)=f(x_s)$ for some $x_s \in (0,s]$. Then
\begin{center}
$\displaystyle 0\le \frac{f_*(s)}{\varphi(s)} \le \frac{f^*(s)}{\varphi(s)}=\frac{f(x_s)}{\varphi(s)} \le \frac{f(x_s)}{\varphi(x_s)}<\epsilon$ for any $s \in (0,\delta),$
\end{center}
which implies that $(f_*)_0=(f^*)_0=0$.

Next, we show that $f_\infty=0$ implies $(f_*)_\infty=(f^*)_\infty=0$. Indeed, let $\epsilon>0$ be given and $f_\infty=0$ be assumed. Then there exists $N>0$ such that for any $s \ge N$,
\begin{center}
$\displaystyle \frac{f(s)}{\varphi(s)}<\epsilon$.
\end{center}
 For $s \ge N$, $f^*(s)\le f^*(N)+f(x_{N,s}),$ where $x_{N,s}$ is the point in $[N,s]$ satisfying $f(x_{N,s})=\max\{f(x): N\le x\le s\}$. Then $$0\le \frac{f_*(s)}{\varphi(s)} \le\frac{f^*(s)}{\varphi(s)}\le  \frac{f^*(N)}{\varphi(s)}+ \frac{f^*(x_{N,s})}{\varphi(x_{N,s})} \le \frac{f^*(N)}{\varphi(s)}+\epsilon.$$
Consequently, $\displaystyle 0\le \limsup_{s \to \infty}\frac{f_*(s)}{\varphi(s)}\le \limsup_{s \to \infty}\frac{f^*(s)}{\varphi(s)}\le \epsilon$, which is true for all $\epsilon>0$. Thus $(f_*)_\infty=(f^*)_\infty=0$.

Finally, we show that, for $a \in \{0,\infty\},$ $f_a=\infty$ implies $(f_*)_a=\infty$. For each $m\in (0,\infty),$ by the extreme value theorem, there exists $m_* \in [\rho_h m,m]$ satisfying $(f_*)(m)=f(m_*),$ and
$$\frac{f_*(m)}{\varphi(m)}=\frac{f(m_*)}{\varphi(m)}\ge \frac{f(m_*)}{\varphi(m_*)\psi_2(\frac{m}{m_*})}\ge \frac{f(m_*)}{\varphi(m_*)\psi_2(\rho_h^{-1})}.$$ As $m \to a\in\{0,\infty\},$ $m_* \to a$, and thus  $(f_*)_a=\infty$, provided $f_a=\infty$.

\item[] $(3)$ By $(A)$ and Remark \ref{RemA} $(2)$, for $i\in \{1,2\}$ and $a \in \{0,\infty\},$
\begin{center}
$\displaystyle  \lim_{m\to a} R_i(m)=\infty$ if $f_a=0$ and $\displaystyle  \lim_{m\to a} R_i(m)=0$ if $f_a=\infty.$
\end{center}

\end{itemize}
\end{Rem}

\begin{Lem}\label{lemA}
Assume that $(A)$ and $h \in \mathcal{H}_{\psi_1}\setminus \{0\}$ hold. Let $m \in (0,\infty)$ be fixed. Then, for any $\lambda \in (R_1(m),\infty)$, $\|H(\lambda,v)\|_\infty > \|v\|$ for all $v \in \partial \mathcal{K}_m$ and
\begin{equation}\label{A1}
i(H(\lambda,\cdot),\mathcal{K}_m,\mathcal{K})=0.
\end{equation}
\end{Lem}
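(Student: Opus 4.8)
The plan is to apply Theorem~\ref{thm_cone}$(i)$ with $T_1=H(\lambda,\cdot)$ and $r=m$, so the task is to show $\|H(\lambda,v)\|_\infty>m$ for every $v\in\partial\mathcal{K}_m$. Fix $\lambda\in(R_1(m),\infty)$ and $v\in\partial\mathcal{K}_m$. Since $f(y)>0$ for $y>0$ and $[\rho_h m,m]$ is a nondegenerate compact subinterval of $(0,\infty)$, we have $f_*(m)>0$; and since $v\in\mathcal{K}$, the inequality $\rho_h m\le v(t)\le\|v\|_\infty=m$ holds on $[\gamma_h^1,\gamma_h^2]$, so $f(v(t))\ge f_*(m)$ there. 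I will bound $\|H(\lambda,v)\|_\infty$ below by $H(\lambda,v)(\gamma_h)$ and compute $H(\lambda,v)(\gamma_h)$ from the two-branch formula defining $H$; the only case distinction is whether the number $\sigma=\sigma(\lambda,v)$ from \eqref{maximum} satisfies $\sigma\ge\gamma_h$ or $\sigma\le\gamma_h$, and this picks the relevant branch. (Nonuniqueness of $\sigma$ is harmless, as $H(\lambda,v)$ is independent of the choice.)

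Suppose $\sigma\ge\gamma_h$. Then $H(\lambda,v)(\gamma_h)=\int_0^{\gamma_h}\frac{1}{c(s)}\varphi^{-1}\big(\frac{1}{d(s)}\int_s^{\sigma}\lambda h(\tau)f(v(\tau))\,d\tau\big)\,ds$. Dropping the nonnegative contribution of $s\in(0,\gamma_h^1)$, replacing $\int_s^{\sigma}$ by the smaller $\int_s^{\gamma_h}$, and then using $c(s)\le\|c\|_\infty$, $d(s)\le\|d\|_\infty$ and $f(v(\tau))\ge f_*(m)$ for $\tau\in[s,\gamma_h]\subseteq[\gamma_h^1,\gamma_h^2]$, I obtain $H(\lambda,v)(\gamma_h)\ge\frac{1}{\|c\|_\infty}\int_{\gamma_h^1}^{\gamma_h}\varphi^{-1}\big(\frac{\lambda f_*(m)}{\|d\|_\infty}\int_s^{\gamma_h}h(\tau)\,d\tau\big)\,ds$. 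Next I apply \eqref{inequality2} twice, extracting the scalar $\frac{1}{\|d\|_\infty}$ and then the scalar $\lambda f_*(m)$, to get, for each $s$, $\varphi^{-1}\big(\frac{\lambda f_*(m)}{\|d\|_\infty}\int_s^{\gamma_h}h\,d\tau\big)\ge\psi_2^{-1}\big(\frac{1}{\|d\|_\infty}\big)\varphi^{-1}\big(\lambda f_*(m)\big)\psi_2^{-1}\big(\int_s^{\gamma_h}h\,d\tau\big)$. Integrating in $s$ over $[\gamma_h^1,\gamma_h]$ and using that $A_1$ is the minimum of the two integrals occurring in its definition, this gives $\|H(\lambda,v)\|_\infty\ge H(\lambda,v)(\gamma_h)\ge\varphi^{-1}\big(\lambda f_*(m)\big)A_1$.

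When $\sigma\le\gamma_h$ the argument is symmetric: using the other branch $H(\lambda,v)(\gamma_h)=\int_{\gamma_h}^{1}\frac{1}{c(s)}\varphi^{-1}\big(\frac{1}{d(s)}\int_{\sigma}^{s}\lambda h(\tau)f(v(\tau))\,d\tau\big)\,ds$, restricting $s$ to $[\gamma_h,\gamma_h^2]$ and replacing $\int_{\sigma}^{s}$ by $\int_{\gamma_h}^{s}$, the same chain of estimates produces $\|H(\lambda,v)\|_\infty\ge\varphi^{-1}\big(\lambda f_*(m)\big)\frac{\psi_2^{-1}(1/\|d\|_\infty)}{\|c\|_\infty}\int_{\gamma_h}^{\gamma_h^2}\psi_2^{-1}\big(\int_{\gamma_h}^{s}h\,d\tau\big)\,ds\ge\varphi^{-1}\big(\lambda f_*(m)\big)A_1$. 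Thus $\|H(\lambda,v)\|_\infty\ge\varphi^{-1}(\lambda f_*(m))A_1$ in either case. Since $\lambda>R_1(m)=\frac{1}{f_*(m)}\varphi(m/A_1)$, we have $\lambda f_*(m)>\varphi(m/A_1)$, hence $\varphi^{-1}(\lambda f_*(m))>m/A_1$, and therefore $\|H(\lambda,v)\|_\infty>m=\|v\|_\infty$. As $v\in\partial\mathcal{K}_m$ was arbitrary and $H(\lambda,\cdot):\overline{\mathcal{K}}_m\to\mathcal{K}$ is completely continuous by Lemma~\ref{complete continuity2}, Theorem~\ref{thm_cone}$(i)$ yields \eqref{A1}.

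The step I expect to be the main obstacle is the extraction of the two scalars $\lambda f_*(m)$ and $1/\|d\|_\infty$ from inside $\varphi^{-1}$: one has to land \emph{exactly} on the integral expression defining $A_1$ (including the $\min$ of the two pieces), and since $\psi_2^{-1}$ is not assumed multiplicative this can only be done by invoking \eqref{inequality2} with one scalar factor at a time, in the right order. Everything else — the reduction to $H(\lambda,v)(\gamma_h)$, truncating the inner and outer integrals, handling the two symmetric positions of $\sigma$, and feeding the resulting bound into the fixed point index theorem — is routine.
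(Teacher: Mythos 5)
Your proposal is correct and follows essentially the same route as the paper: split on the position of $\sigma$ relative to $\gamma_h$, truncate the outer integral to $[\gamma_h^1,\gamma_h]$ (or $[\gamma_h,\gamma_h^2]$) and the inner one to the $\gamma_h$ endpoint, bound $c,d$ by their extrema and $f(v)$ below by $f_*(m)$, pull out the scalars with \eqref{inequality2}, and compare with $A_1$ and $R_1(m)$ before invoking Theorem \ref{thm_cone}$(i)$. The only cosmetic difference is that you bound $\|H(\lambda,v)\|_\infty$ below by $H(\lambda,v)(\gamma_h)$, whereas the paper evaluates at the maximum point $\sigma$; the resulting estimates coincide.
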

\begin{proof}
Let $\lambda \in (R_1(m),\infty)$ and $v \in \partial \mathcal{K}_m$ be fixed. Then $\rho_h m\le v(t)\le m$ for $t \in [\gamma_h^1,\gamma_h^2],$ and
\begin{equation}\label{A2}
f(v(t))\ge f_*(m)= \frac{1}{R_1(m)}\varphi(\frac{m}{A_1})~\hbox{for}~t \in [\gamma_h^1,\gamma_h^2].
 \end{equation}
Let $\sigma$ be a number satisfying $H(\lambda,v)(\sigma)=\|H(\lambda,v)\|_\infty.$ We have two cases: either $(i)$ $\sigma\in [\gamma_h,1)$ or $(ii)$  $\sigma \in (0,\gamma_h)$. We only consider the case $(i),$ since the case $(ii)$ can be dealt in a similar manner.
Since $\lambda >R_1(m),$ it follows from \eqref{inequality2} and \eqref{A2} that
\begin{eqnarray*}
\|H(\lambda,v)\|_\infty&=&\int_0^{\sigma} \frac{1}{c(s)}\varphi^{-1} \left(\frac{1}{d(s)}\int_s^{\sigma} \lambda h(\tau)f(v(\tau))d\tau\right)ds\\
&\ge& \frac{1}{\|c\|_\infty}\int_{\gamma_h^1}^{\gamma_h}\varphi^{-1}\left(\int_s^{\gamma_h} h(\tau)d\tau\frac{\lambda}{\|d\|_\infty R_1(m)}\varphi(\frac{m}{A_1})\right)ds\\
&\ge&  \frac{1}{\|c\|_\infty}\int_{\gamma_h^1}^{\gamma_h}\psi_2^{-1}\left(\int_s^{\gamma_h} h(\tau)d\tau\right)ds\varphi^{-1}(\frac{\lambda}{\|d\|_\infty R_1(m)}\varphi(\frac{m}{A_1}))\\
&>& \frac{1}{\|c\|_\infty}\int_{\gamma_h^1}^{\gamma_h}\psi_2^{-1}\left(\int_s^{\gamma_h} h(\tau)d\tau\right)ds\psi_2^{-1}(\frac{1}{\|d\|_\infty})\frac{m}{A_1}\ge m=\|v\|_\infty.
\end{eqnarray*}
By Theorem \ref{thm_cone}, \eqref{A1} holds for any $\lambda \in (R_1(m),\infty)$.
\end{proof}

\begin{Lem}\label{lemB}
Assume that $(A)$ and $h \in \mathcal{H}_{\psi_1}\setminus \{0\}$ hold. Let $m \in (0,\infty)$ be fixed. Then, for any $\lambda \in (0,R_2(m)),$ $\|H(\lambda,v)\|_\infty < \|v\|$ for all $v \in \partial \mathcal{K}_m$ and
\begin{equation}\label{B1}
i(H(\lambda,\cdot),\mathcal{K}_m,\mathcal{K})=1.
\end{equation}
\end{Lem}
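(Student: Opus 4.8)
The plan is to mirror the argument of Lemma \ref{lemA}, but now use the \emph{upper} bound $f^*(m)$ and the estimate for $\|H(\lambda,v)\|_\infty$ coming from the full interval $[0,1]$ rather than from $[\gamma_h^1,\gamma_h^2]$, together with part $(ii)$ of Theorem \ref{thm_cone}. Fix $\lambda\in(0,R_2(m))$ and $v\in\partial\mathcal{K}_m$, so that $0\le v(t)\le\|v\|_\infty=m$ for all $t\in[0,1]$ and hence $f(v(t))\le f^*(m)$ for all $t\in[0,1]$. Let $\sigma=\sigma(\lambda,v)$ be a zero of $\nu_{F(\lambda,v)}$, so that $\|H(\lambda,v)\|_\infty=H(\lambda,v)(\sigma)$. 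As in the previous lemma there are two symmetric cases according to whether $\sigma\in[\gamma_h,1)$ or $\sigma\in(0,\gamma_h)$; treat the first, the second being analogous after reflecting $t\mapsto 1-t$.

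In the case $\sigma\in[\gamma_h,1)$ I would bound, using $0<c_0\le c(s)$ and $0<d_0\le d(s)$,
\begin{equation*}
\|H(\lambda,v)\|_\infty=\int_0^\sigma\frac{1}{c(s)}\varphi^{-1}\!\left(\frac{1}{d(s)}\int_s^\sigma\lambda h(\tau)f(v(\tau))\,d\tau\right)ds
\le\frac{1}{c_0}\int_0^1\varphi^{-1}\!\left(\frac{\lambda f^*(m)}{d_0}\int_s^{1}h(\tau)\,d\tau\right)ds,
\end{equation*}
where one also uses that $\int_s^\sigma h\le\int_s^1 h$ when $s\le\sigma$ and, more carefully, that for $s\ge\gamma_h$ the inner integral vanishes beyond $\beta_h$ so the relevant range is really $[0,\gamma_h]$; the cleanest route is to split $\int_0^\sigma=\int_0^{\gamma_h}+\int_{\gamma_h}^\sigma$ and observe $\int_s^\sigma h(\tau)d\tau\le\int_s^{\gamma_h}h(\tau)d\tau$ fails for $s>\gamma_h$, so instead bound $\int_{\gamma_h}^\sigma$ by reflecting and comparing with the $\int_{\gamma_h}^1\psi_1^{-1}(\int_{\gamma_h}^s h)$ term in $A_2$. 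Then apply \eqref{inequality2} to pull the constant out: $\varphi^{-1}\big(\lambda f^*(m)d_0^{-1}\int_s^{\gamma_h}h\big)\le\psi_1^{-1}\big(\int_s^{\gamma_h}h\big)\,\varphi^{-1}\big(\lambda f^*(m)/d_0\big)$, and similarly for the reflected piece. Collecting the two pieces and taking the $\max$ that defines $A_2$ gives
\begin{equation*}
\|H(\lambda,v)\|_\infty\le\frac{1}{c_0}\,\varphi^{-1}\!\left(\frac{\lambda f^*(m)}{d_0}\right)\max\!\left\{\int_0^{\gamma_h}\psi_1^{-1}\!\Big(\int_s^{\gamma_h}h\Big)ds,\ \int_{\gamma_h}^1\psi_1^{-1}\!\Big(\int_{\gamma_h}^s h\Big)ds\right\}
=A_2\,\frac{\psi_1^{-1}\!\big(\tfrac{1}{d_0}\big)^{-1}}{1}\,\varphi^{-1}\!\left(\frac{\lambda f^*(m)}{d_0}\right).
\end{equation*}
Since $\lambda<R_2(m)=f^*(m)^{-1}\varphi(m/A_2)$, we get $\varphi^{-1}\big(\lambda f^*(m)/d_0\big)<\varphi^{-1}\big(d_0^{-1}\varphi(m/A_2)\big)\le\psi_1^{-1}(1/d_0)\,(m/A_2)$ by \eqref{inequality2}, and substituting yields $\|H(\lambda,v)\|_\infty< m=\|v\|_\infty$. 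Theorem \ref{thm_cone}$(ii)$ then gives \eqref{B1}.

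The main obstacle is the bookkeeping in the integral estimate: unlike Lemma \ref{lemA}, where one only needs a \emph{lower} bound restricted to $[\gamma_h^1,\gamma_h^2]$ and the choice of $\sigma$ is harmless, here the upper bound must absorb the entire domain $[0,\sigma]$, and the zero $\sigma$ can lie anywhere in $[\gamma_h,1)$, so one must argue that $\int_0^\sigma\varphi^{-1}(d_0^{-1}\int_s^\sigma F)\,ds$ is dominated by the symmetrized quantity built from $\gamma_h$ that appears in the definition of $A_2$. I would handle this by using the defining identity \eqref{maximum} for $\sigma$ together with the monotonicity of $\nu^1_g,\nu^2_g$ established in Section 2 to replace $\sigma$ by $\gamma_h$ at the cost of the $\max$ in $A_2$, exactly as is implicitly done when $A_2$ was defined; this is the only genuinely nontrivial point, and everything else is the same $\psi_1$-homogeneity manipulation as in the proof of Lemma \ref{lemA}.
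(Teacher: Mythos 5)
Your overall strategy is the paper's strategy (the paper itself just dualizes Lemma \ref{lemA}): bound $f(v(t))\le f^*(m)=\frac{1}{R_2(m)}\varphi(\frac{m}{A_2})$ on $[0,1]$, estimate $\|H(\lambda,v)\|_\infty$ against the quantity defining $A_2$, and invoke Theorem \ref{thm_cone}$(ii)$; moreover your final chain, from the max-type bound to $\|H(\lambda,v)\|_\infty<m$ via \eqref{inequality2} and $\lambda<R_2(m)$, is correct. The genuine gap is in how you reach that max-type bound. In the case $\sigma\in[\gamma_h,1)$ you keep the left-hand representation $\|H(\lambda,v)\|_\infty=\int_0^\sigma\frac{1}{c(s)}\varphi^{-1}\bigl(\frac{1}{d(s)}\int_s^\sigma F(\lambda,v)(\tau)d\tau\bigr)ds$. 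The first bound you display, with inner integral $\int_s^1 h(\tau)d\tau$, is a dead end: $h\in\mathcal{H}_{\psi_1}$ need not belong to $L^1(0,1)$ (in the paper's example $\int_s^1h=\infty$ for every $s$), and in any case such a bound cannot be compared with the $\gamma_h$-based integrals in $A_2$. The alternative you then sketch, splitting $\int_0^\sigma=\int_0^{\gamma_h}+\int_{\gamma_h}^\sigma$ and ``collecting the two pieces,'' produces the \emph{sum} of the two integrals entering $A_2$, not their \emph{maximum}; with $A_2$ defined through a max, that argument does not yield $\|H(\lambda,v)\|_\infty<m$ and the proof does not close.

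The correct step, which you only gesture at in your closing paragraph, is to pair each case with the \emph{other} one-sided formula, using the defining identity \eqref{maximum}: if $\sigma\ge\gamma_h$, write $\|H(\lambda,v)\|_\infty=\int_\sigma^1\frac{1}{c(s)}\varphi^{-1}\bigl(\frac{1}{d(s)}\int_\sigma^s F(\lambda,v)(\tau)d\tau\bigr)ds$; since $[\sigma,1]\subseteq[\gamma_h,1]$ and $\int_\sigma^s h\le\int_{\gamma_h}^s h$ for $s\ge\sigma\ge\gamma_h$, this is at most $\frac{1}{c_0}\int_{\gamma_h}^1\varphi^{-1}\bigl(\frac{\lambda f^*(m)}{d_0}\int_{\gamma_h}^s h(\tau)d\tau\bigr)ds$, which after \eqref{inequality2} is exactly (a multiple of) the second entry of the max in $A_2$; if $\sigma\le\gamma_h$, keep the left-hand formula and enlarge $[0,\sigma]$ to $[0,\gamma_h]$ to obtain the first entry. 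No splitting is needed, and no monotonicity of $\nu^1_g,\nu^2_g$ is used---only \eqref{maximum} and $h\ge0$. This is the same mechanism as in Lemma \ref{lemA}, except that for an upper bound the case $\sigma\ge\gamma_h$ must be matched with the right-hand representation (whereas for the lower bound in Lemma \ref{lemA} the left-hand one was the convenient choice). With this single correction your estimates coincide with the paper's intended ``same argument as Lemma \ref{lemA}'' and \eqref{B1} follows from Theorem \ref{thm_cone}$(ii)$.
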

\begin{proof} Let $\lambda \in (0,R_2(m))$ and $v \in \partial \mathcal{K}_m$ be fixed. Then $f(v(t))\le \frac{1}{R_2(m)}\varphi(\frac{m}{A_2})~\hbox{for}~t \in [0,1].$
By the same argument as in the proof of Lemma \ref{lemA}, it follows that $\|H(\lambda,v)\|_\infty < \|v\|$ for all $v \in \partial \mathcal{K}_m$ and \eqref{B1} holds for any $\lambda \in (0,R_2(m)).$
\end{proof}

By Lemma \ref{lemA} and Lemma \ref{lemB}, we give some results for the existence and multiplicity of positive solutions to problem \eqref{1.1}. Since the proofs are similar, we only give the proof of Theorem \ref{main1}.

\begin{Thm}\label{main1}
Assume that $(A)$ and $h \in \mathcal{H}_{\psi_1}\setminus \{0\}$ hold, and that there exist
$m_1$ and $m_2$ such that $0<m_1<m_2$ (resp., $0<m_2<m_1$) and $R_1(m_1)<R_2(m_2).$
 Then \eqref{1.1} has a positive solution $u=u(\lambda)$ satisfying $m_1<\|u\|_\infty <m_2$ (resp., $m_2<\|u\|_\infty <m_1$) for any $\lambda \in (R_1(m_1),R_2(m_2))$.
\end{Thm}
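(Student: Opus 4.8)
The plan is to read off the solution directly from the fixed point index computations already performed in Lemma \ref{lemA} and Lemma \ref{lemB}, combined with the additivity property of the index. Fix any $\lambda \in (R_1(m_1),R_2(m_2))$; the hypothesis $R_1(m_1)<R_2(m_2)$ guarantees that this interval is nonempty. By Lemma \ref{complete continuity2}, $H(\lambda,\cdot)$ is completely continuous on every closed ball $\overline{\mathcal{K}}_r$, so all indices below are well defined. Since $\lambda>R_1(m_1)$, Lemma \ref{lemA} with $m=m_1$ yields $\|H(\lambda,v)\|_\infty>\|v\|_\infty$ for all $v\in\partial\mathcal{K}_{m_1}$ (in particular $H(\lambda,\cdot)$ is fixed point free on $\partial\mathcal{K}_{m_1}$) and $i(H(\lambda,\cdot),\mathcal{K}_{m_1},\mathcal{K})=0$. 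Since $\lambda<R_2(m_2)$, Lemma \ref{lemB} with $m=m_2$ yields $\|H(\lambda,v)\|_\infty<\|v\|_\infty$ for all $v\in\partial\mathcal{K}_{m_2}$ and $i(H(\lambda,\cdot),\mathcal{K}_{m_2},\mathcal{K})=1$.

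Consider first the case $0<m_1<m_2$, so that $\overline{\mathcal{K}}_{m_1}\subset\mathcal{K}_{m_2}$. Because $H(\lambda,\cdot)$ has no fixed point on $\partial\mathcal{K}_{m_1}$ or on $\partial\mathcal{K}_{m_2}$, the additivity property of the fixed point index gives
$$i\big(H(\lambda,\cdot),\mathcal{K}_{m_2}\setminus\overline{\mathcal{K}}_{m_1},\mathcal{K}\big)=i(H(\lambda,\cdot),\mathcal{K}_{m_2},\mathcal{K})-i(H(\lambda,\cdot),\mathcal{K}_{m_1},\mathcal{K})=1-0=1.$$
Since this index is nonzero, $H(\lambda,\cdot)$ has a fixed point $u$ in $\mathcal{K}_{m_2}\setminus\overline{\mathcal{K}}_{m_1}$, that is, $m_1<\|u\|_\infty<m_2$. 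By Remark \ref{solution_property}, $u$ is a positive solution of \eqref{1.1}. The case $0<m_2<m_1$ is symmetric: now $\overline{\mathcal{K}}_{m_2}\subset\mathcal{K}_{m_1}$ and the same additivity property gives
$$i\big(H(\lambda,\cdot),\mathcal{K}_{m_1}\setminus\overline{\mathcal{K}}_{m_2},\mathcal{K}\big)=i(H(\lambda,\cdot),\mathcal{K}_{m_1},\mathcal{K})-i(H(\lambda,\cdot),\mathcal{K}_{m_2},\mathcal{K})=0-1=-1\neq 0,$$
so $H(\lambda,\cdot)$ has a fixed point $u$ with $m_2<\|u\|_\infty<m_1$, again a positive solution by Remark \ref{solution_property}.

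I do not expect any real obstacle here: the analytic content has been absorbed into Lemmas \ref{lemA} and \ref{lemB}, and what remains is a bookkeeping argument with the index. The only points requiring care are that $\mathcal{K}_{m_1}$ and $\mathcal{K}_{m_2}$ are nested in the correct order according to which of $m_1,m_2$ is larger, and that the strict inequalities on $\partial\mathcal{K}_{m_i}$ supplied by the two lemmas are precisely what makes the inner boundary fixed point free, so that the excision step is legitimate.
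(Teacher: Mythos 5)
Your proof is correct and follows essentially the same route as the paper: Lemmas \ref{lemA} and \ref{lemB} supply the indices on $\mathcal{K}_{m_1}$ and $\mathcal{K}_{m_2}$, additivity/excision gives a nonzero index on the annular region $\mathcal{K}_{m_2}\setminus\overline{\mathcal{K}}_{m_1}$ (resp.\ $\mathcal{K}_{m_1}\setminus\overline{\mathcal{K}}_{m_2}$), and the resulting fixed point is a positive solution by Remark \ref{solution_property}. In fact your index assignment ($0$ on $\mathcal{K}_{m_1}$ and $1$ on $\mathcal{K}_{m_2}$ when $m_1<m_2$, annular index $+1$) is the one actually consistent with the two lemmas, whereas the paper's proof writes the two values the other way around and gets $-1$; this is a harmless slip, since only the nonvanishing of the index matters.
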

\begin{proof}
We only prove the case $0<m_1<m_2$, since the other case is similar. Let $\lambda \in (R_1(m_1),R_2(m_2))$ be fixed.
By Lemma \ref{lemA} and Lemma \ref{lemB}, $i(H(\lambda,\cdot),\mathcal K_{m_1},\mathcal{K})=1$ and $i(H(\lambda,\cdot),\mathcal K_{m_2},\mathcal{K})=0.$ Since  $H(\lambda,v)\neq v$ for all $v \in \partial \mathcal{K}_{m_1}$, it follows from the additivity property that $i(H(\lambda,\cdot),\mathcal K_{m_2} \setminus \overline{\mathcal K}_{m_1} ,\mathcal{K})=-1.$ Then there exists $u \in \mathcal K_{m_2} \setminus \overline{\mathcal K}_{m_1}$ such that $H(\lambda,u)=u$ by the solution property. Thus the proof is complete.
\end{proof}

For Theorem \ref{main2} and Theorem \ref{main3}, let $R_*:=\max\{R_1(m_1),R_1(M_1)\}$ and $R^*:=min\{R_2(m_2),R_2(M_2)\}$.
\begin{Thm}\label{main2}
Assume that $(A)$ and $h \in \mathcal{H}_{\psi_1}\setminus \{0\}$ hold, and that there exist
$m_1,m_2$ and $M_1$ (resp., $M_2$) such that $0<m_1<m_2<M_1$ (resp., $0<m_2<m_1<M_2$) and $R_*<R_2(m_2)$ (resp., $R_1(m_1)<R^*$).
 Then \eqref{1.1} has two positive solutions $u_1=u_1(\lambda)$ and $u_2=u_2(\lambda)$ satisfying $m_1<\|u_1\|_\infty <m_2<\|u_2\|_\infty<M_1$ for any $\lambda \in (R_*,R_2(m_2))$ (resp., $m_2<\|u_1\|_\infty <m_1<\|u_2\|_\infty<M_2$ for any $\lambda \in (R_1(m_1),R^*))$.
\end{Thm}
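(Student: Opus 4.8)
The plan is to mimic verbatim the argument used for Theorem \ref{main1}, only now stacking three nested balls instead of two. I will treat the first case $0<m_1<m_2<M_1$ in detail; the case $0<m_2<m_1<M_2$ is perfectly symmetric. Fix $\lambda\in(R_*,R_2(m_2))$. Since $R_*=\max\{R_1(m_1),R_1(M_1)\}>0$, this $\lambda$ is admissible both for Lemma \ref{lemB} (it lies in $(0,R_2(m_2))$) and for Lemma \ref{lemA} at the radii $m_1$ and $M_1$ (it lies in $(R_1(m_1),\infty)$ and in $(R_1(M_1),\infty)$ because $\lambda>R_*$).

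First I would apply Lemma \ref{lemB} at radius $m_2$ to get $\|H(\lambda,v)\|_\infty<\|v\|_\infty$ on $\partial\mathcal{K}_{m_2}$ and $i(H(\lambda,\cdot),\mathcal{K}_{m_2},\mathcal{K})=1$. Then I would apply Lemma \ref{lemA} at radii $m_1$ and $M_1$ to get $\|H(\lambda,v)\|_\infty>\|v\|_\infty$ on $\partial\mathcal{K}_{m_1}$ and on $\partial\mathcal{K}_{M_1}$, hence $i(H(\lambda,\cdot),\mathcal{K}_{m_1},\mathcal{K})=i(H(\lambda,\cdot),\mathcal{K}_{M_1},\mathcal{K})=0$. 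In particular $H(\lambda,\cdot)$ has no fixed point on $\partial\mathcal{K}_{m_1}\cup\partial\mathcal{K}_{m_2}\cup\partial\mathcal{K}_{M_1}$, so the additivity (excision) property of the fixed point index applied on the annuli $\mathcal{K}_{m_2}\setminus\overline{\mathcal{K}}_{m_1}$ and $\mathcal{K}_{M_1}\setminus\overline{\mathcal{K}}_{m_2}$ yields
\[
i\bigl(H(\lambda,\cdot),\mathcal{K}_{m_2}\setminus\overline{\mathcal{K}}_{m_1},\mathcal{K}\bigr)=1-0=1,\qquad
i\bigl(H(\lambda,\cdot),\mathcal{K}_{M_1}\setminus\overline{\mathcal{K}}_{m_2},\mathcal{K}\bigr)=0-1=-1.
\]
Both indices are nonzero, so the solution property of the index gives fixed points $u_1\in\mathcal{K}_{m_2}\setminus\overline{\mathcal{K}}_{m_1}$ and $u_2\in\mathcal{K}_{M_1}\setminus\overline{\mathcal{K}}_{m_2}$ of $H(\lambda,\cdot)$; these are distinct since their norms lie in the disjoint intervals $(m_1,m_2)$ and $(m_2,M_1)$, and by Remark \ref{solution_property} they are positive solutions of \eqref{1.1} with $m_1<\|u_1\|_\infty<m_2<\|u_2\|_\infty<M_1$. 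For the resp.\ case one instead uses Lemma \ref{lemB} at $m_2$ and $M_2$ and Lemma \ref{lemA} at $m_1$, so that $\mathcal{K}_{m_1}\setminus\overline{\mathcal{K}}_{m_2}$ and $\mathcal{K}_{M_2}\setminus\overline{\mathcal{K}}_{m_1}$ carry indices $-1$ and $1$, producing the two solutions with $m_2<\|u_1\|_\infty<m_1<\|u_2\|_\infty<M_2$ for every $\lambda\in(R_1(m_1),R^*)$.

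There is no real obstacle here; the proof is pure bookkeeping on top of Lemmas \ref{lemA}–\ref{lemB}. The only points worth checking are (i) that $R_*>0$ (resp.\ $R^*>0$), so that the $\lambda$-interval in the statement is nonempty and simultaneously meets the admissibility ranges of Lemma \ref{lemA} and Lemma \ref{lemB}; and (ii) that the three spheres are genuinely nested, which is exactly the ordering $0<m_1<m_2<M_1$ (resp.\ $0<m_2<m_1<M_2$), so that the additivity formula applies with no ambiguity. Everything else repeats the computation already carried out in Theorem \ref{main1}.
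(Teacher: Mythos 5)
Your proposal is correct and is exactly the argument the paper intends: the paper omits the proof of Theorem \ref{main2} precisely because it is the same combination of Lemma \ref{lemA}, Lemma \ref{lemB} and the additivity/solution properties of the fixed point index used for Theorem \ref{main1}, now applied on the two nested annuli $\mathcal{K}_{m_2}\setminus\overline{\mathcal{K}}_{m_1}$ and $\mathcal{K}_{M_1}\setminus\overline{\mathcal{K}}_{m_2}$ (resp.\ $\mathcal{K}_{m_1}\setminus\overline{\mathcal{K}}_{m_2}$ and $\mathcal{K}_{M_2}\setminus\overline{\mathcal{K}}_{m_1}$). Your index computations and the resulting norm localizations match the statement, so no gaps remain.
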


\begin{Thm}\label{main3}
Assume that $(A)$ and $h \in \mathcal{H}_{\psi_1}\setminus \{0\}$ hold, and that there exist
$m_1,m_2,M_1$ and $M_2$ such that $0<m_2<m_1<M_2<M_1$ (resp., $0<m_1<m_2<M_1<M_2$) and $R_* < R^*.$
 Then \eqref{1.1} has three positive solutions $u_1=u_1(\lambda),u_2=u_2(\lambda)$ and $u_3=u_3(\lambda)$ satisfying $m_2<\|u_1\|_\infty <m_1<\|u_2\|_\infty<M_2<\|u_3\|_\infty <M_1$ (resp., $m_1<\|u_1\|_\infty <m_2<\|u_2\|_\infty<M_1<\|u_3\|_\infty <M_2)$ for any $\lambda\in (R_* , R^*)$.
\end{Thm}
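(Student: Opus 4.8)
The plan is to compute the fixed point index of $H(\lambda,\cdot)$ on the four balls $\mathcal{K}_{m_2},\mathcal{K}_{m_1},\mathcal{K}_{M_2},\mathcal{K}_{M_1}$ and then to extract one fixed point from each of three pairwise disjoint annular regions via the additivity property of the index. I would only write out the case $0<m_2<m_1<M_2<M_1$ with $R_*<R^*$ in detail, since the case $0<m_1<m_2<M_1<M_2$ is entirely analogous (with the roles of the indices ``$0$'' and ``$1$'' interchanged).

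First I would fix $\lambda\in(R_*,R^*)$; this interval is nonempty precisely because of the hypothesis $R_*<R^*$. Since $\lambda>R_*=\max\{R_1(m_1),R_1(M_1)\}$, Lemma \ref{lemA} applies with $m=m_1$ and with $m=M_1$, so $H(\lambda,\cdot)$ has no fixed point on $\partial\mathcal{K}_{m_1}\cup\partial\mathcal{K}_{M_1}$ and
\[
i(H(\lambda,\cdot),\mathcal{K}_{m_1},\mathcal{K})=i(H(\lambda,\cdot),\mathcal{K}_{M_1},\mathcal{K})=0.
\]
Likewise, since $\lambda<R^*=\min\{R_2(m_2),R_2(M_2)\}$, Lemma \ref{lemB} applies with $m=m_2$ and with $m=M_2$, so $H(\lambda,\cdot)$ has no fixed point on $\partial\mathcal{K}_{m_2}\cup\partial\mathcal{K}_{M_2}$ and
\[
i(H(\lambda,\cdot),\mathcal{K}_{m_2},\mathcal{K})=i(H(\lambda,\cdot),\mathcal{K}_{M_2},\mathcal{K})=1.
\]
Because $0<m_2<m_1<M_2<M_1$ gives the strict nesting $\overline{\mathcal{K}}_{m_2}\subset\mathcal{K}_{m_1}$, $\overline{\mathcal{K}}_{m_1}\subset\mathcal{K}_{M_2}$ and $\overline{\mathcal{K}}_{M_2}\subset\mathcal{K}_{M_1}$, the additivity property then yields
\begin{gather*}
i(H(\lambda,\cdot),\mathcal{K}_{m_1}\setminus\overline{\mathcal{K}}_{m_2},\mathcal{K})=-1,\\
i(H(\lambda,\cdot),\mathcal{K}_{M_2}\setminus\overline{\mathcal{K}}_{m_1},\mathcal{K})=1,\\
i(H(\lambda,\cdot),\mathcal{K}_{M_1}\setminus\overline{\mathcal{K}}_{M_2},\mathcal{K})=-1.
\end{gather*}
Each of these indices being nonzero, the solution property of the index produces fixed points $u_1\in\mathcal{K}_{m_1}\setminus\overline{\mathcal{K}}_{m_2}$, $u_2\in\mathcal{K}_{M_2}\setminus\overline{\mathcal{K}}_{m_1}$ and $u_3\in\mathcal{K}_{M_1}\setminus\overline{\mathcal{K}}_{M_2}$ of $H(\lambda,\cdot)$; since these three annuli are pairwise disjoint, $u_1,u_2,u_3$ are distinct, they satisfy $m_2<\|u_1\|_\infty<m_1<\|u_2\|_\infty<M_2<\|u_3\|_\infty<M_1$, and by Remark \ref{solution_property} they are positive solutions of \eqref{1.1}. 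For the remaining case one fixes $\lambda\in(R_*,R^*)$ with $0<m_1<m_2<M_1<M_2$, obtains index $0$ on $\mathcal{K}_{m_1},\mathcal{K}_{M_1}$ and index $1$ on $\mathcal{K}_{m_2},\mathcal{K}_{M_2}$ the same way, and reads off indices $1,-1,1$ on the three annuli $\mathcal{K}_{m_2}\setminus\overline{\mathcal{K}}_{m_1}$, $\mathcal{K}_{M_1}\setminus\overline{\mathcal{K}}_{m_2}$, $\mathcal{K}_{M_2}\setminus\overline{\mathcal{K}}_{M_1}$.

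I do not expect any genuine obstacle here: once Lemma \ref{lemA} and Lemma \ref{lemB} are in hand, the argument is pure index bookkeeping of exactly the same type used in the proof of Theorem \ref{main1}. The only points that need care are that a single $\lambda\in(R_*,R^*)$ must simultaneously force the index to vanish on the two outer balls and equal $1$ on the two inner balls --- which is precisely the reason $R_*$ is defined as a maximum and $R^*$ as a minimum --- and that the three annular regions from which $u_1,u_2,u_3$ are extracted must be checked to be pairwise disjoint, so that the three fixed points are indeed distinct solutions.
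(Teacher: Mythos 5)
Your proposal is correct and follows exactly the route the paper intends: the paper omits the proof of Theorem \ref{main3} on the grounds that it is similar to Theorem \ref{main1}, i.e., apply Lemma \ref{lemA} and Lemma \ref{lemB} to get index $0$ on $\mathcal{K}_{m_1},\mathcal{K}_{M_1}$ and index $1$ on $\mathcal{K}_{m_2},\mathcal{K}_{M_2}$ for $\lambda\in(R_*,R^*)$, then use additivity and the solution property on the three nested annuli. Your index bookkeeping ($-1,1,-1$, resp.\ $1,-1,1$) and the resulting norm localization of the three solutions are all accurate.
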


Now we give the existence and nonexistence results for positive solutions to problem \eqref{1.1} which are analogous to \cite[Theorem 1.1 and Theorem 1.2]{MR1946560}.

\begin{Thm}\label{main4}
Assume that $(A)$ and $h \in \mathcal{H}_{\psi_1}\setminus \{0\}$ hold.
\begin{itemize}
\item[$(1)$] If $f_0=0$ and $f_\infty=\infty$ (resp., $f_0 =\infty$ and $f_\infty=0$), then \eqref{1.1} has a positive solution $u(\lambda)$ for any $\lambda\in (0,\infty)$ satisfying $\|u_\lambda\|_\infty \to \infty$ as $\lambda \to 0$ and $\|u_\lambda\|_\infty \to 0$ as $\lambda \to \infty$ (resp.,  $\|u_\lambda\|_\infty \to 0$ as $\lambda \to 0$ and $\|u_\lambda\|_\infty \to \infty$ as $\lambda \to \infty$).

\item[$(2)$] If $f_0=0$ and $f_\infty \in (0,\infty)$ (resp., $f_0 \in (0,\infty)$ and $f_\infty=0$), then
 there exists\ $\lambda_*\in (0,\infty)$ and $m_* \in (0,\infty]$ (resp., $m_* \in \mathbb{R}_+)$ such that \eqref{1.1} has a positive solution $u(\lambda)$ for any $\lambda\in (\lambda_*,\infty)$ satisfying $\|u(\lambda)\|_\infty \in (0,m_*)$ and $\displaystyle\lim_{\lambda\to \infty}\|u(\lambda)\|_\infty=0$ (resp., $\|u(\lambda)\|_\infty \in (m_*,\infty)$ and $\displaystyle\lim_{\lambda\to \infty}\|u(\lambda)\|_\infty=\infty)$.
  Moreover, if $m_* \in (0,\infty),$ then there exists a positive solution $u(\lambda_*)$ to problem \eqref{1.1} with $\lambda=\lambda_*$.

\item[$(3)$] If $f_0=\infty$ and $f_\infty \in (0,\infty)$ (resp., $f_0\in (0,\infty)$ and $f_\infty=\infty$), then
 there exist $\lambda^*\in (0,\infty)$ and $m^* \in (0,\infty]$ (resp., $m^*\in \mathbb{R}_+$) such that problem \eqref{1.1} has a positive solution $u=u(\lambda)$ for any $\lambda\in (0,\lambda^*)$ satisfying $\|u(\lambda)\|_\infty \in (0,m^*)$ and $\displaystyle \lim_{\lambda\to 0}\|u(\lambda)\|_\infty= 0$ (resp., $\|u(\lambda)\|_\infty \in (m^*,\infty)$ and $\displaystyle\lim_{\lambda\to 0}\|u(\lambda)\|_\infty = \infty$). Moreover, if $m^* \in (0,\infty),$ then there exists a positive solution $u(\lambda^*)$ to problem \eqref{1.1} with $\lambda=\lambda^*$.

\item[$(4)$] If $f_0=f_\infty=0,$ then there exist $\lambda_* \in (0,\infty)$ and $m_*\in (0,\infty)$ such that \eqref{1.1} has two positive solutions $u_1(\lambda)$ and $u_2(\lambda)$ for any $\lambda\in (\lambda_*,\infty)$ and it has a positive solution $u(\lambda_*)$ for $\lambda=\lambda_*$. Moreover, $u_1(\lambda)$ and $u_2(\lambda)$ can be chosen so that $0<\|u_1(\lambda)\|_\infty<m_*<\|u_2(\lambda)\|_\infty $, $\displaystyle \lim_{\lambda\to \infty}\|u_1(\lambda)\|_\infty= 0$ and $\displaystyle\lim_{\lambda\to \infty}\|u_2(\lambda)\|_\infty = \infty$.

\item[$(5)$] If $f_0=f_\infty=\infty,$ then
 there exist $\lambda^*\in (0,\infty)$ and $m^*\in (0,\infty)$ such that problem \eqref{1.1} has two positive solutions $u_1(\lambda)$ and $u_2(\lambda)$ for any $\lambda\in (0,\lambda^*)$ and it has a positive solution $u(\lambda^*)$ for $\lambda=\lambda^*$. Moreover, $u_1(\lambda)$ and $u_2(\lambda)$ can be chosen so that $0<\|u_1(\lambda)\|_\infty<m^*<\|u_2(\lambda)\|_\infty $, $\displaystyle \lim_{\lambda\to 0}\|u_1(\lambda)\|_\infty= 0$ and $\displaystyle\lim_{\lambda\to 0}\|u_2(\lambda)\|_\infty = \infty$.
\item[$(6)$] If $f_0 \in [0,\infty)$ and $f_\infty \in [0,\infty)$, then there exists $\overline \lambda>0$ such that \eqref{1.1} has no positive solutions for $\lambda <\overline \lambda.$
\item[$(7)$] If $f_0 \in (0,\infty]$ and $f_\infty \in (0,\infty]$, then there exists $\underline \lambda>0$ such that \eqref{1.1} has no positive solutions for $\lambda > \underline \lambda.$
\end{itemize}
\end{Thm}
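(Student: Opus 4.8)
The plan is to deduce all seven assertions from two ingredients: the fixed point index computations of Lemmas~\ref{lemA} and~\ref{lemB} (equivalently Theorems~\ref{main1} and~\ref{main2}), and the boundary behaviour of $R_1,R_2$ recorded in Remark~\ref{RemA}(3), glued together by one uniform a priori estimate. That estimate, read off from the \emph{strict} norm inequalities in the proofs of Lemmas~\ref{lemA}--\ref{lemB}, is: every positive solution $u$ of \eqref{1.1} with parameter $\lambda$ satisfies $R_2(\|u\|_\infty)\le\lambda\le R_1(\|u\|_\infty)$. Indeed $u$ is a fixed point of $H(\lambda,\cdot)$ lying on $\partial\mathcal{K}_{\|u\|_\infty}$ (Remark~\ref{solution_property}), so $\lambda>R_1(\|u\|_\infty)$ would contradict Lemma~\ref{lemA} and $\lambda<R_2(\|u\|_\infty)$ would contradict Lemma~\ref{lemB}. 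This single estimate will both locate the solutions we construct and drive the nonexistence claims.

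For the existence parts (1)--(5) I would fix an admissible $\lambda$ and, using the one-sided limits of $R_1,R_2$, choose radii $0<m_1,m_2$ (and, in (4)--(5), a third radius $M_1$ or $M_2$) realizing the prescribed sign pattern of $R_1(\cdot)-\lambda$ and $R_2(\cdot)-\lambda$, then apply Theorem~\ref{main1} or Theorem~\ref{main2}. For instance, in (1) with $f_0=0,f_\infty=\infty$ one has $R_2\to\infty$ at $0^+$ and $R_1\to0$ at $\infty$, so picking $m_2$ small with $R_2(m_2)>\lambda$ and $m_1>m_2$ large with $R_1(m_1)<\lambda$ and invoking Theorem~\ref{main1} gives a solution with $m_2<\|u\|_\infty<m_1$; in (2) with $f_0=0,f_\infty\in(0,\infty)$ the function $R_1$ stays positive and bounded away from $0$ on $[\delta,\infty)$, so $\lambda_*:=\inf_{m>0}R_1(m)\in(0,\infty)$, and for $\lambda>\lambda_*$ one picks $m_1=m_1(\lambda)$ with $R_1(m_1)<\lambda$ and $m_2<m_1$ small with $R_2(m_2)>\lambda$; in (4) with $f_0=f_\infty=0$ the function $R_1$ tends to $\infty$ at both ends, so $\lambda_*:=\min_{m>0}R_1(m)$ is attained at some $\hat m$, and for $\lambda>\lambda_*$ one sets $m_1=\hat m$ and picks $m_2<\hat m<M_2$ with $R_2(m_2),R_2(M_2)>\lambda$, whereupon the ``resp.'' form of Theorem~\ref{main2} produces two solutions with $m_2<\|u_1\|_\infty<\hat m<\|u_2\|_\infty<M_2$. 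Parts (3) and (5) are the mirror images, using $\lambda^*:=\sup_{m>0}R_2(m)$ and $\lambda^*:=\max_{m>0}R_2(m)=R_2(\tilde m)$ respectively, and the ``resp.'' variants of (1)--(3) are obtained by interchanging $0$ and $\infty$.

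The limits of $\|u(\lambda)\|_\infty$ then follow by feeding the a priori estimate into the one-sidedness of the limits in Remark~\ref{RemA}(3): since $R_1,R_2$ are continuous and strictly positive (hence bounded and bounded away from $0$ on compact subsets of $(0,\infty)$), $R_2(\|u(\lambda)\|_\infty)\le\lambda\to0$ forces $\|u(\lambda)\|_\infty\to\infty$ whenever $R_2(m)\to0$ only as $m\to\infty$, and $R_1(\|u(\lambda)\|_\infty)\ge\lambda\to\infty$ forces $\|u(\lambda)\|_\infty\to0$ whenever $R_1(m)\to\infty$ only as $m\to0$. In the ``double'' cases the fixed radius $\hat m$ (resp. $\tilde m$) is the separating constant $m_*$ (resp. $m^*$); in (2)--(3) one takes $m_*$ (resp. $m^*$) to be the supremum over $\lambda$ of the auxiliary radius $m_1(\lambda)$, which may be $+\infty$. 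When that constant is finite — automatically so in (4)--(5) — a solution at the threshold is obtained by choosing $\lambda_n\to\lambda_*$ (resp. $\lambda^*$): the norms $\|u(\lambda_n)\|_\infty$ are trapped in a fixed compact subinterval of $(0,\infty)$ (upper bound the separating constant; lower bound because $R_2(\|u(\lambda_n)\|_\infty)\le\lambda_n$ stays bounded while $R_2\to\infty$ near $0$, or the $R_1$-analogue), so complete continuity of $H$ (Lemma~\ref{complete continuity2}) yields a subsequential limit $u^*=H(\lambda_*,u^*)$ with $\|u^*\|_\infty>0$. Finally, for (6)--(7): if $f_0,f_\infty\in[0,\infty)$ then $f^*(m)\le C\varphi(m)$ for all $m>0$ by an argument analogous to Remark~\ref{RemA}(2) with finite limits, so $(A)$ gives $R_2(m)\ge\psi_1(1/A_2)/C=:\overline{\lambda}>0$ uniformly in $m$, and the a priori estimate rules out positive solutions for $\lambda<\overline{\lambda}$; symmetrically $f_0,f_\infty\in(0,\infty]$ gives $f_*(m)\ge c\varphi(m)$, hence $R_1(m)\le\psi_2(1/A_1)/c=:\underline{\lambda}$ uniformly, ruling out positive solutions for $\lambda>\underline{\lambda}$.

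The hardest part will be the intermediate cases $f_0$ or $f_\infty\in(0,\infty)$ in (2)--(3) together with the uniform comparisons in (6)--(7): there $R_i$ neither blows up nor vanishes at one endpoint, so the threshold must be defined as an $\inf$ or $\sup$ that need not be attained and the separating radius $m_1(\lambda)$ must be chosen so that its supremum over $\lambda$ is exactly the constant $m_*$ (possibly $+\infty$), while passing from the two-point behaviour of $f/\varphi$ to a global bound $f^*\le C\varphi$ (resp. $f_*\ge c\varphi$) relies essentially on the sub/supermultiplicativity in $(A)$, exactly as in Remark~\ref{RemA}(2). Everything else is the routine bookkeeping of which of the two index lemmas applies on which ball $\mathcal{K}_m$.
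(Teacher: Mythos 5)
Your proposal is correct, and its skeleton for the existence assertions (1)--(5) coincides with the paper's: use Remark \ref{RemA}(3) to read off the endpoint behaviour of $R_1,R_2$, choose radii $m_1(\lambda),m_2(\lambda)$ (and $M_1$ or $M_2$) so that Theorem \ref{main1} or \ref{main2} applies, define $\lambda_*=\inf R_1$ (resp.\ $\lambda^*=\sup R_2$), and get the threshold solution by running $\lambda_n\to\lambda_*$ and using the complete continuity of $H$ together with a uniform lower bound on $\|u_n\|_\infty$. Where you genuinely deviate is in packaging everything else through the single a priori estimate $R_2(\|u\|_\infty)\le\lambda\le R_1(\|u\|_\infty)$, extracted from the strict inequalities of Lemmas \ref{lemA}--\ref{lemB}: the paper never states this bound. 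For the norm asymptotics the paper instead \emph{constructs} a selection (choosing $m_1(\lambda)\to0$, etc.), whereas your estimate shows that \emph{every} positive solution obeys the claimed limits, which is cleaner and slightly stronger; and for the nonexistence parts (6)--(7) the paper redoes the integral estimate at the maximum point of a solution (obtaining $\overline\lambda=\frac{d_0}{C_1}\psi_1(\frac{c_0}{h^*})$ and $\underline\lambda=\frac{\|d\|_\infty}{h_*\epsilon}\psi_2(\frac{\|c\|_\infty}{\gamma_0})$), while you obtain uniform bounds $R_2\ge\overline\lambda$ and $R_1\le\underline\lambda$ from $f^*\le C\varphi$ and $f_*\ge c\varphi$ via $(A)$ and then invoke the same a priori bound -- equivalent inequalities, reorganized so the index lemmas do double duty. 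Two small points to tighten when writing it out: your parenthetical ``bounded and bounded away from $0$ on compact subsets'' is not by itself enough for the limit arguments -- you also need the endpoint control you allude to (e.g.\ in case (2), $R_1$ bounded near $\infty$ requires $f_*(m)\ge c\varphi(m)$ for large $m$, which follows from $f_\infty>0$ and $\varphi(\rho_h m)\ge\psi_1(\rho_h)\varphi(m)$, an argument in the spirit of Remark \ref{RemA}(2) but not literally contained in it); and in (7) the pointwise bound $f(s)\ge\epsilon\varphi(s)$ can only be asserted for $s>0$ when $f(0)=0$, which suffices since $f_*(m)$ only samples $y\ge\rho_h m>0$. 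These are routine to fill in and do not affect the validity of the plan.
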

\begin{proof}  $(1)$ We only give the proof of the case $f_0=0$ and $f_\infty=\infty,$ since the case $f_0=\infty$ and $f_\infty=0$ can be proved in a similar manner. Since $f_0=0$ and $f_\infty=\infty,$ by Remark \ref{RemA} (3), $R_i(m) \to \infty$ as $m \to 0$ and $R_i(m) \to 0$ as $m \to \infty$ for $i=1,2.$ For any $\lambda \in (0,\infty),$ there exist $m_1(\lambda)$ and $m_2(\lambda)$ such that
\begin{center}
$0<m_2(\lambda)<m_1(\lambda)$ and $R_1(m_1(\lambda))<\lambda < R_2(m_2(\lambda))$.
\end{center}
 By Theorem \ref{main1}, there exists a positive solution $u_\lambda$ to problem \eqref{1.1} satisfying
  \begin{center}
  $m_2(\lambda)<\|u_\lambda\|_\infty <m_1(\lambda).$
\end{center}
Since $R_i(m) \to \infty$ as $m \to 0$ for $i=1,2$, we may choose $m_1(\lambda)$ and $m_2(\lambda)$ so that $0<m_2(\lambda)<m_1(\lambda)$ and $m_1(\lambda) \to 0$ as $\lambda \to \infty$. Consequently, we can choose positive solutions $u_\lambda$ to problem \eqref{1.1} for large $\lambda>0$ so that $\|u_\lambda\|_\infty \to 0$ as $\lambda \to \infty$. Similarly, since $R_i(m) \to 0$ as $m \to \infty$ for $i=1,2$, we can choose positive solutions $u_\lambda$ to problem \eqref{1.1} for small $\lambda>0$ so that $\|u_\lambda\|_\infty \to \infty$ as $\lambda \to 0$.

 $(2)$ We only give the proof of the case $f_0=0$ and $f_\infty \in (0,\infty)$, since the case $f_0\in (0,\infty)$ and $f_\infty=0$ can be proved in a similar manner. Since $f_0=0$, by Remark \ref{RemA} (3), $R_i(m) \to \infty$ as $m \to 0$ for $i=1,2.$ Since
 \begin{center}
 $\displaystyle \lim_{m \to \infty} R_1(m)\ge \lim_{m \to \infty} \frac{\varphi(\frac{m}{A_1})}{f(m)} \ge  \lim_{m \to \infty} \frac{\varphi(m)}{f(m)}\psi_1(\frac{1}{A_1})  = \frac{1}{f_\infty}\psi_1(\frac{1}{A_1})>0,$
 \end{center}
there exist $\lambda_*:=\inf\{R_1(m): m \in (0,\infty)\}\in (0,\infty)$ and $m_* \in (0,\infty]$ satisfying $R_1(m_*)=\lambda_*.$  For any $\lambda \in (\lambda_*,\infty),$ there exist $m_1(\lambda)$ and $m_2(\lambda)$ such that
\begin{center}
$0<m_2(\lambda)<m_1(\lambda)<m_*$ and $R_1(m_1)<\lambda < R_2(m_2)$.
\end{center}
 By Theorem \ref{main1}, there exists a positive solution $u_\lambda$ to problem \eqref{1.1} satisfying $m_2(\lambda)<\|u\|_\infty <m_1(\lambda).$ Since $R_i(m) \to \infty$ as $m \to 0$ for $i=1,2,$ we may choose $m_1(\lambda)$ and $m_2(\lambda)$ satisfying
  \begin{center}
  $0<m_2(\lambda)<m_1(\lambda)$ and $m_1(\lambda) \to 0$ as $\lambda \to \infty$.
\end{center}
Consequently,  we can choose positive solutions $u_\lambda$ to problem \eqref{1.1} for large $\lambda>0$ so that $\|u_\lambda\|_\infty \to 0$ as $\lambda \to \infty$.

For each $n \in \mathbb{N}$, let $\lambda_n:=\lambda_*+\frac{1}{n}$. Then we may choose $m_1=m_1(n)$ and $m_2=m_2(n)$ such that
\begin{center}
$R_1(m_1(n))<\lambda_n<R_2(m_2(n))$ and
$0<\delta< m_2(n)< m_1(n)<m_*$ for all $n.$
\end{center}
Consequently, for each $n,$ there exists $u_n\in \mathcal{K}$ such that $H(\lambda_n,u_n)=u_n$ and $\delta<\|u_n\|_\infty < m_*$.  Since $\{(\lambda_n,u_n)\}$ is bounded in $\mathbb{R}_+\times \mathcal{K}$ and $H:\mathbb{R}_+\times \mathcal{K} \to \mathcal{K}$ is compact, there exist a subsequence $\{(\lambda_{n_k},u_{n_k})\}$ of $\{(\lambda_n,u_n)\}$ and $u_*\in \mathcal{K}$ such that $H(\lambda_{n_k},u_{n_k})=u_{n_k} \to u_*$ in $\mathcal{K}$ as $k \to \infty.$ Since $\lambda_n \to \lambda_*$ as $n\to \infty$ and $H$ is continuous, $H(\lambda_*,u_*)=u_*$ and $\|u_*\|_\infty\ge \delta>0.$ Thus problem \eqref{1.1} has a positive solution $u_*$ for $\lambda=\lambda_*$. Thus the proof is complete.

$(3)$ Let $\lambda^*=\sup\{R_2(m): m \in (0,\infty)\}\in (0,\infty)$ and $m^* \in [0,\infty]$ satisfying $R_2(m^*)=\lambda^*.$ Then the proof is complete by the argument similar to those in the proof of Theorem \ref{main4} $(2).$

$(4)$ Since $f_0=f_\infty=0,$ it follows that, for $i=1,2,$ $\displaystyle  \lim_{m\to 0} R_i(m)=\lim_{m\to \infty} R_i(m)=\infty$. Then there exists $m_* \in (0,\infty)$ satisfying
\begin{center}
$R_1(m_*)=\min \{R_1(m): m \in \mathbb{R}_+\} \in (0,\infty).$
\end{center}
 Let $\lambda_*=R_1(m_*).$ For any $\lambda \in (\lambda_*,\infty)$, there exist $m_1(\lambda),m_2(\lambda),M_1(\lambda)$ and $M_2(\lambda)$ such that
 \begin{center}
  $0<m_2(\lambda)<m_1(\lambda)<m_* <M_1(\lambda)<M_2(\lambda)$
  \end{center}
   and
   \begin{center}
$R_1(m_1(\lambda))=R_1(M_1(\lambda)) <\lambda < R_2(m_2(\lambda))=R_2(M_2(\lambda)).$
\end{center}
Then the proof is complete by the argument similar to those in the proof of Theorem \ref{main4} $(2).$

 $(5)$ Since $f_0=f_\infty=\infty,$ it follows that, for $i=1,2,$ $\displaystyle  \lim_{m\to 0} R_i(m)=\lim_{m\to \infty} R_i(m)=0$. Let $\lambda^*=\max\{R_2(m): m \in \mathbb{R}_+\}\in (0,\infty)$ and $m^* \in (0,\infty)$ satisfying $R_2(m^*)=\lambda^*.$ Then the proof is complete by the argument similar to those in the proof of Theorem \ref{main4} $(2).$

$(6)$ Let $u$ be a positive solution to problem \eqref{1.1} with $\lambda>0$ and let $\sigma$ be a constant satisfying $u(\sigma)=\|u\|_\infty$. Since $f_0 \in [0,\infty)$ and $f_\infty \in [0,\infty)$, there exists $C_1>0$ such that $f(s) \le C_1 \varphi(s)$ for $s \in\mathbb{R}_+.$ We only consider the case $\sigma \le \gamma_h,$ since the case $\sigma >\gamma_h$ can be dealt in a similar manner.
Since $f(u(t)) \le C_1 \varphi(u(\sigma))$ for $t \in [0,1],$
\begin{eqnarray*}
u(\sigma)&=&\int_0^{\sigma}\frac{1}{c(s)}\varphi^{-1} \left(\frac{1}{d(s)}\int_s^\sigma \lambda h(\tau)f(u(\tau))d\tau\right)ds\\
&\le& c_0^{-1}\int_0^{\gamma_h} \varphi^{-1} \left(\int_{s}^{\gamma} h(\tau)d\tau d_0^{-1}\lambda  C_1\varphi(u(\sigma))\right)ds\\
&\le& c_0^{-1}h^* \varphi^{-1}(d_0^{-1}\lambda  C_1 \varphi(u(\sigma)))\le c_0^{-1}h^* \psi_1^{-1}(d_0^{-1}\lambda  C_1) u(\sigma).
\end{eqnarray*}
Here $$h^*=\max \left\{\int_0^{\gamma_h}\psi_1^{-1}\left(\int_s^{\gamma_h}h(\tau)d\tau\right)ds, \int_{\gamma_h}^{1}\psi_1^{-1}\left(\int^s_{\gamma_h}h(\tau)d\tau\right)ds\right\}>0.$$
Consequently,
\begin{center}
$\displaystyle\lambda \ge \frac{d_0}{C_1}\psi_1(\frac{c_0}{h^*})=:\overline\lambda.$
\end{center}

$(7)$ Let $u$ be a positive solution to problem \eqref{1.1} with $\lambda>0$ and let $\sigma$ be a constant satisfying $u(\sigma)=\|u\|_\infty$. Since $f_0 \in (0,\infty]$ and $f_\infty \in (0,\infty]$, there exists $\epsilon>0$ such that $f(s)> \epsilon \varphi(s)$ for $s \in\mathbb{R}_+.$ We only consider the case $\sigma \ge \gamma_h,$ since the case $\sigma <\gamma_h$ can be dealt in a similar manner.
Since $u(t) \ge u(\gamma_h^1)$ for  $t \in [\gamma_h^1,\sigma],$
$f(u(t))> \epsilon \varphi(u(\gamma_h^1))$ for $t \in [\gamma_h^1,\gamma].$ Then
\begin{eqnarray*}
u(\gamma_h^1)&=&\int_0^{\gamma_h^1}\frac{1}{c(s)}\varphi^{-1} \left(\frac{1}{d(s)}\int_s^\sigma \lambda h(\tau)f(u(\tau))d\tau\right)ds\\
&\ge& \|c\|_\infty^{-1}\int_0^{\gamma_h^1} \varphi^{-1} \left(\int_{\gamma_h^1}^{\gamma_h} h(\tau)d\tau \|d\|_\infty^{-1}\lambda  \epsilon \varphi(u(\gamma_h^1))\right)ds\\
&\ge&\|c\|_\infty^{-1}\gamma_0 \varphi^{-1}(h_* \|d\|_\infty^{-1}\lambda  \epsilon \varphi(u(\gamma_h^1)))
\ge \|c\|_\infty^{-1}\gamma_0 \psi_2^{-1}(h_* \|d\|_\infty^{-1}\lambda  \epsilon) u(\gamma_h^1).
\end{eqnarray*}
Here
\begin{center}
$\displaystyle \gamma_0=\min\{\gamma_h^1, 1-\gamma_h^2\}>0$ and $\displaystyle h_* = \min\{\int_{\gamma_h^1}^{\gamma_h} h(\tau)d\tau, \int_{\gamma_h}^{\gamma_h^2} h(\tau)d\tau\}>0.$
\end{center}
Consequently,
\begin{center}
$\displaystyle \lambda \le \frac{\|d\|_\infty}{h_* \epsilon}\psi_2(\frac{\|c\|_\infty}{\gamma_0})=:\underline \lambda.$
\end{center}
\end{proof}

Finally, an example to illustrate the results obtained in this paper is given.
\begin{Ex}
Let $\varphi$ be an odd function satisfying either
\begin{center}
$(i)$ $\varphi(x)=\frac{x^2}{1+x}$ or $(ii)$ $\varphi(x)=x+x^2$ for $x \ge 0$.
\end{center}
 Then it is easy to check that  $(A)$ is satisfied for
\begin{center}
$\psi_1(y)=\min\{y,y^2\}$ and $\psi_2(y)=\max\{y,y^2\}$.
\end{center}
 Define $h:(0,1) \to \mathbb{R}_+$ by
 \begin{center}
 $h(t)=0$ for $t \in [0,\frac{1}{16}]$ and $h(t)=(t-\frac{1}{16})(1-t)^{-a}$ for $t \in (\frac{1}{16},1).$
\end{center}
Then, since $\psi_1^{-1}(s)=s$ for all $s \ge 1,$ $h \in \mathcal{H}_{\psi_1} \setminus L^1(0,1)$ for any $a\in [1,2)$. Also, $\alpha_h=\bar \beta_h=\frac{1}{16}$, $\beta_h=\bar \alpha_h=1,$ $\gamma_h^1=\frac{19}{64},$ $\gamma_h^2=\frac{49}{64}$ and $\gamma_h=\frac{17}{32}.$

Let $c,d$ be any positive continuous functions on $[0,1]$. Then $\rho_1,$ $\rho_h,$ $A_1$ and $A_2$ are well defined in $(0,\infty).$ Define $f :\mathbb{R}_+ \to \mathbb{R}_+$ by
$$f(s)=\left\{
                       \begin{array}{ll}
                       (\varphi(s))^2, & \hbox{for}~s\in[0,1],\\
                         (\varphi(1))^{\frac{3}{2}}(\varphi(s))^{\frac{1}{2}}, & \hbox{for} s \in (1,M_2], \\
                        (\varphi(1))^{\frac{3}{2}} (\varphi(M_2))^{-\frac{3}{2}}(\varphi(s))^2, & \hbox{for} s \in (M_2,\infty).
                       \end{array}
                     \right.$$
Here $M_2$ is a fixed constant satisfying
\begin{center}
$\displaystyle M_2 > \max\left\{\frac{1}{\rho_h}, \varphi^{-1}\left(\frac{1}{\varphi(1)}\left[\varphi\left(\frac{1}{\rho_h A_1}\right)\right]^2\left[\psi_1\left(\frac{1}{A_2}\right)\right]^{-2}\right)\right\}.$
\end{center}
Clearly, $f \in C(\mathbb{R}_+,\mathbb{R}_+)$ satisfies that $f(s)>0$ for $s >0,$  $f_0=0$ and $f_\infty=\infty.$
Since $f$ is strictly increasing on $\mathbb{R}_+,$ $f^*(m)=f(m)$ and $f_*(m)=f(\rho_h m)$ for all $m \in \mathbb{R}_+.$ Then, by \eqref{inequality2},
\begin{center}
$ \displaystyle R_1(m)= \frac{1}{f(\rho_h m)}\varphi(\frac{m}{A_1})$
 and
$\displaystyle R_2(m)=\frac{1}{f(m)}\varphi(\frac{m}{A_2}) \ge \frac{\varphi(m)}{f(m)}\psi_1(\frac{1}{A_2})$ for $m \in (0,\infty).$
\end{center}
 From the choice of $M_2$, it follows that
$$R_1(\frac{1}{\rho_h}) = \frac{1}{f(1)} \varphi(\frac{1}{\rho_h A_1})= \frac{1}{[\varphi(1)]^2} \varphi(\frac{1}{\rho_h A_1})<\frac{[\varphi(M_2)]^{\frac{1}{2}}}{[\varphi(1)]^{\frac{3}{2}}}\psi_1(\frac{1}{A_2})= \frac{\varphi(M_2)}{f(M_2)}\psi_1(\frac{1}{A_2}) \le R_2(M_2).$$
We may choose $m_2$ and $M_1$ satisfying $0<m_2<m_1=\rho_h^{-1}<M_2<M_1$ and
 \begin{center}
$R_1(M_1)\le R_1(\rho_h^{-1})<R_2(M_2) \le R_2(m_2),$
\end{center}
since $R_2(m) \to \infty$ as $m \to 0$ and $R_1(m) \to 0$ as $m \to \infty$. By Theorem \ref{main3} and Theorem \ref{main4} $(1)$, problem \eqref{1.1} has three positive solutions for $\lambda \in (R_1(\rho_h^{-1}),R_2(M_2))$ and it has a positive solution $u(\lambda)$ for $\lambda \in (0,\infty)$ satisfying that $\displaystyle \lim_{\lambda\to 0}\|u(\lambda)\|_\infty=\infty$ and $\displaystyle\lim_{\lambda\to \infty}\|u(\lambda)\|_\infty=0$.
\end{Ex}

\section{Conclusions}

In this work, we studied the existence and nonexistence of positive solutions to problem \eqref{1.1}. In Theorem \ref{main1}, Theorem \ref{main2}, Theorem \ref{main3} and Theorem \ref{main4}, various sufficient conditions on the nonlinearity $f$ for the existence, nonexistence and multiplicity of positive solutions to problem \eqref{1.1} were given. In particular, Theorem \ref{main4} improves on the results in \cite{MR1946560}, since we do not assume the monotonicity of $d$, and the weight function $h: (0,1) \to \mathbb{R}_+$ may not be $L^1(0,1)$ and it can be vanished in some subinterval of $(0,1)$.

In \cite{jeong2019existence}, the nonlinearity $f=f(t,s)$ should satisfy the condition
\begin{equation}\label{condition}
f(t_0,0)>0~ \hbox{for some}~t_0\in [0,1],
\end{equation}
so that the existence of an unbounded solution component was shown and, by examining the shape of the component according to several assumptions on $f$ at $\infty$, the existence, nonexistence and multiplicity of positive solutions were studied. In this case, all nonnegative solutions are positive ones by \eqref{condition}. Compared with the results in \cite{jeong2019existence}, the nonlinearity $f=f(s)$ in the present work may have the property $f(0)=0$. Even though the existence of unbounded solution component to problem \eqref{1.1} can be obtained by \cite[Theorem 1]{jeong2019existence}, the existence of positive solutions cannot be shown from the solution component, since problem \eqref{1.1} has a trivial solution $0$ for every $\lambda \in \mathbb{R}_+$, provided $f(0)=0.$ Thus, the fixed point index theory was used in order to prove the main results (Theorem \ref{main1}, Theorem \ref{main2}, Theorem \ref{main3} and Theorem \ref{main4}).

In the present work, the problem with Dirichlet boundary conditions was considered. As an extension of the results in this paper, similar results for the problem with nonlocal boundary conditions is expected. The existence, nonexistence and multiplicity of positive solutions to problem with nonlocal boundary conditions will be discussed for future work.

{\bf Acknowledgements}  This research was supported by Basic Science Research Program through the National Research Foundation of Korea(NRF) funded by the Ministry of Education(2017R1D1A1B03035623).

\end{document}